\patchcmd{\ALG@doentity}{\addvspace{0.0em}}{\addvspace{4em}}{}{}
\crefname{equation}{equation}{equations}
\Crefname{equation}{Equation}{Equations}
\pgfplotsset{compat=1.11}
\numberwithin{equation}{section}
\newtheorem{theorem}{Theorem}[section]
\newtheorem{lemma}[theorem]{Lemma}
\newtheorem{definition}[theorem]{Definition}
\theoremstyle{remark}
\newtheorem{remark}[theorem]{Remark}
\crefname{assumption}{Assumption}{Assumptions}
\crefname{remark}{Remark}{Remarks}
\crefname{example}{Example}{Examples}
\newcommand{\Nb}{\mathbb{N}}
\newcommand{\Rb}{\mathbb{R}}
\newcommand{\Cb}{\mathbb{C}}
\newcommand{\Ren}{\mathbb{R}^{n}}
\DeclareMathOperator{\arccosh}{arccosh}
\newcommand{\bigo}[1]{\mathcal{O}(#1)}
\newcommand{\dt}{\partial_t}
\newcommand{\oz}{\omega_0}
\newcommand{\ou}{\omega_1}
\newcommand{\vz}{\upsilon_0}
\newcommand{\vu}{\upsilon_1}
\newcommand{\fs}{f_S}
\newcommand{\ff}{f_F}
\newcommand{\rhos}{\rho_S}
\newcommand{\rhof}{\rho_F}
\newcommand{\costs}{c_S}
\newcommand{\costf}{c_F}
\newcommand{\bx}{\bm{x}}
\newcommand{\bfe}{\overline{f}_\eta}
\newcommand{\bfei}{\overline{f}_{\eta,1}}
\renewcommand{\Pi}{P} 
\newcommand{\Pe}{R} 
\newcommand{\Ps}{\Phi} 
\newcommand{\OF}{\Omega_F}
\title{Second order explicit stabilized multirate method for stiff differential equations with error control}
\author{Mathieu Benninghoff\textsuperscript{a} and Gilles Vilmart\textsuperscript{b}}
\date{\today}
\begin{document}

\footnotetext[1]{Universit\'e de Gen\`eve, Section de math\'ematiques, CP 64, 1211 Gen\`eve 4, Switzerland, {Mathieu.Benninghoff@unige.ch}}
\footnotetext[2]{Universit\'e de Gen\`eve, Section de math\'ematiques, CP 64, 1211 Gen\`eve 4, Switzerland, Gilles.Vilmart@unige.ch}

\maketitle
\begin{abstract}
Explicit stabilized methods are highly efficient time integrators for large and stiff systems of ordinary differential equations especially when applied to semi-discrete parabolic problems. However, when local spatial mesh refinement is introduced, their efficiency decreases, since the stiffness is driven by only the smallest mesh element. A natural approach is to split the system into fast stiff and slower mildly stiff components. In this context, [A. Abdulle, M.J. Grote and G. Rosilho de Souza 2022] proposed the order one multirate explicit stabilized method (mRKC). We extend their approach to second order and introduce the new multirate ROCK2 method (mROCK2), which achieves high precision and allows a step-size strategy with error control. Numerical methods including the heat equation with local spatial mesh refinements confirm the accuracy and efficiency of the scheme.
\end{abstract}

\textbf{Key words.} stabilized second-order Runge--Kutta methods, explicit time integrators, stiff equations, multirate methods, local time-stepping, parabolic problems, Chebyshev methods.\\

\textbf{AMS subject classifications (2020).}
65L04, 65L06, 65L20.

\section{Introduction}
Semi-discrete parabolic problems yield large systems of ordinary differential equations (ODEs) where the stiffness is strongly influenced by the spatial mesh refinement. In fact, considering a finite element discretization, the stiffness depends only on the diameter of the smallest element. Consequently, when the spatial discretization involves local  refinements, the stiffness of the system and the stability of numerical integrators is determined by only a few elements. A natural strategy is to split the system into two parts: an expensive but only mildly stiff component, associated with relatively slow (S) time scales, and a cheap but severely stiff component, associated with fast (F) time scales. This leads us to consider the system:
\begin{equation}\label{eq:ode}
\dot{y} =f(y)\vcentcolon = \ff(y)+\fs(y), \qquad\qquad
y(0)=y_0,
\end{equation}
where $\dot y(t)=\frac{dy}{dt}$ and the vector field $f:\Ren \rightarrow \Ren$ can be spitted into $\fs$, the expensive but only mildly stiff part and $\ff$, the cheap but severely stiff part. This decomposition also arises naturally in other contexts, such as chemical reaction kinetics and the modeling of electrical circuits.

For stiff problems on high-dimensional systems, standard explicit Runge-Kutta methods are limited by severe time-step restrictions, while implicit methods are computationally costly for large systems. Explicit stabilized Runge–Kutta schemes, such as RKC based on Chebyshev polynomials~\cite{HoS80, VaS80, VHS90, SSV98} (see also the presentation in \cite{HW96,Abd13c}), and ROCK methods which rely on orthogonal polynomials with quasi-optimal stability~\cite{Abd02,AbM01}, overcome this limitation by enlarging stability regions along the negative real axis, making them efficient for uniformly refined meshes, see for instance the explicit stabilized method PIROCK \cite{AbV13} buit upon ROCK2 and applied recently in astrophysics in the context of solar atmosphere simulations \cite{WVMP24}.
However, their performance deteriorates under local mesh refinement, where stiffness depends only on a few fine elements.

For such problems, it is desirable to construct methods that combine a multirate strategy with the favorable stability properties of explicit stabilized schemes. A partitioned approach has been proposed in the literature \cite{Alm23, Zbi11} and more recently in \cite{TaH25}, but these methods are adapted to problems where the slow component is assumed non-stiff, which is not the case in the context of local mesh refinement. 
To overcome this limitation, a method combining multirate strategy and stabilized integrators was introduced by A.~Abdulle, G.R.~de Souza and M.~J.~Grote \cite{AbG22}: the multirate RKC scheme (mRKC)\footnote{In the literature RKC usually refers to the second order Runge-Kutta Chebyshev methods~\cite{VaS80, VHS90, SSV98}. In this paper, it shall refer to the order one version with optimal stability domain size used for the design of mRKC \cite{AbG22}.}. This scheme applies explicit stabilized Runge--Kutta techniques to an averaged force \( f_\eta \), whose stiffness is independent of \( f_F \). A micro-step procedure with step size~$\eta$ is then employed to approximate the averaged force while stabilizing the fast component~$f_F$, whereas the macro-step~$\tau$ is used to integrate the full system and stabilize the slow component~$f_S$. These techniques have been developed in the context of first-order methods. 
Extending such stabilized multirate methods to high order is however challenging. Indeed, an earlier attempt in \cite{Abd20} is an interpolation-based approach that employs two different explicit stabilized methods, but which suffers from undesired  order reduction phenomena and instabilities.

In this paper, we extend this approach \cite{Abd20,AbG22} by introducing mROCK2, a second-order explicit stabilized multirate method based on the same averaged-force concept. Analogously to the first order mRKC, mROCK2 does not require any scale separation or interpolation between stages, but its improved order two of convergence yields  not only a better accuracy, but also it allows for an adaptive step-size strategy with error control which is essential of an efficient time integration of large dimensional dissipative systems.

To construct this method, this paper is organized as follows. Section~2 builds on the approach introduced in~\cite{AbG22}, focusing on a modified equation whose spectrum is bounded by the slow component of~\eqref{eq:ode}. While the previous work relies on a modified equation providing a first-order approximation of the original problem, we extend this approach by introducing a new modified equation that achieves a second-order approximation. In Section~3, we present the stabilized methods used to approximate the fast and slow components of the system based on the modified equation. Section~4 is the presentation and the convergence and stability analysis of the new second-order multirate stabilized scheme, the mROCK2 method. 
Finally, in Section~5, we demonstrate the effectiveness and efficiency of our approach through applications to stiff ordinary and partial differential equations (PDEs).

\section{Averaged force and modified equation}
The goal of this section is to construct a multirate strategy based on explicit stabilized methods. Inspired by \cite{AbG22}, we replace the force function $f$ with an averaged force whose stiffness depends only on the spectral radius of the slow component. The micro-macro strategy consists of first approximating with a micro method with step size $\eta$ this averaged force, and then applying a macro method to the corresponding modified equation with a macro-step size $\tau$.
We now explain how the averaged force proposed in \cite{AbG22} can be modified to to attain second-order accuracy and we analyze its stability properties.

\subsection{Averaged force and limitations for acheiving second-order}
The main idea in~\cite{AbG22} is to introduce a modified equation
\begin{equation}\label{def:fe}
\dot{y}_{\eta,1} = f_{\eta,1}(y_{\eta}), 
\quad y_{\eta,1}(0) = y_0, 
\end{equation}
where the original force function $f$ in~(1.1) is replaced by an averaged force $f_{\eta,1}$ that depends on a parameter $\eta >0$ and defined by
\begin{equation}
f_{\eta,1}(y) = \frac{1}{\eta}\bigl(u(\eta) - y\bigr)
\label{eq:deff1}
\end{equation}
where the auxiliary solution $u : [0,\eta] \to \mathbb{R}^n$ is given by
\begin{equation}
\dot{u} = f_F(u) + f_S(y), 
\quad u(0) = y. 
\label{eq:defu1}
\end{equation}

This modification results in a reduced spectral radius. Specifically, the spectrum of the Jacobian $\partial f_{\eta,1} / \partial y$, denoted by $\rho_{\eta,1}$, is smaller than that of the Jacobian $\partial f_S / \partial y$, denoted by $\rho_S$, for $\eta > \tfrac{2}{\rho_S}$. Thus, the stiffness is governed only by the slow component $f_S$, which makes the integration of the modified system significantly cheaper.  
However, the averaged vector field $f_{\eta,1}$ does not yield straightforwardly a second-order method. Indeed, if we perform a Taylor expansion of $y_{\eta,1}$ around $y_0=y(0)$, we obtain 
$
y_{\eta,1} - y_0 = \tau f_{\eta,1}(y_0) + \frac{\tau^2}{2} f_{\eta,1}'(y_0) f(y) + \mathcal{O}(\tau^3) $ and $
f_{\eta,1}(y_0)  = f(y)+ \tfrac{\eta}{2} f_F'(y_0) f(y_0) + \mathcal{O}(\eta^2).
$ Then, it follows that  
$
y_{\eta,1} - y(\eta) 
= \tfrac{\tau \eta}{2} f_F'(y_0) f(y_0) 
   + \tfrac{\tau^2}{2} f_{\eta,1}'(y_0) f(y_0) -\tfrac{\tau^2}{2} f'(y_0) f(y_0)
   + \mathcal{O}(\tau \eta^2 + \tau^2 \eta+\tau^3).
$
 Hence, when using this modified equation, we obtain only a first-order approximation. To construct a second-order scheme, we need to modifiy the definition of of the averaged force \eqref{eq:deff1} and introduce a new modification $f_{\eta,2}$, which satisfies
\begin{equation}
f_{\eta,2}(y) = f(y) + \mathcal{O}(\eta^2).
\end{equation}
In the next section, we detail the construction of such a \textit{second-order averaged force}.
 
\subsection{The second order averaged force} \label{sec:f2}
Analogously to the first-order case, the original force function \( f \) from \cref{eq:ode} is replaced by an \emph{averaged force} \( f_{\eta,2} \). The resulting equation becomes
\begin{equation}\label{eq:odemod2}
\dot y_{\eta,2}=f_{\eta,2}(y_{\eta}), \qquad\qquad
y_{\eta}(0)=y_0,  
\end{equation}
\begin{definition}
	For $\eta>0$, the second-order averaged force $f_{\eta,2}:\Ren\rightarrow\Ren$ is defined as
	\begin{equation}\label{def:fe2}
		f_{\eta,2}(y)=\frac{1}{\eta}(v(\eta)-y),
	\end{equation}
	where  the auxiliary solution $v:[0,\eta]\rightarrow \Ren$ is defined by solving successively two fast ODE systems. Considering first $u:[0,\eta]\rightarrow \Ren$ given by:
	\begin{align}\label{eq:defu2}
	\dot{u}&=\ff(u)+\fs(y), & 
	u(0)=y,
	\end{align}
    we obtain as an intermediate step $f_{\eta,1}(y)=\frac{1}{\eta}(u(\eta)-y)$, the classical averaged-force as defined in \eqref{eq:defu1}. Then we consider the differential equation for $v(t)$ given by:
    \begin{align}\label{eq:defv}
	\dot{v}&=\ff\left(v-\frac{\eta}{2}f_{\eta,1}(y)\right)+\fs(y), & 
	v(0)=y.
	\end{align}
    For $\eta=0$, let $f_{0,2}=f$.
    \end{definition}
This second-order averaged force also provides a spectral compression, Theorem \ref{secondordertheorem} (\cref{sec:stab_modeq}) establishes that the spectral radius satisfies $\rho_{\eta,2} < 1.5\,\rho_s$. In exchange for increased accuracy, we incur an additional computational cost: the evaluation of $f_F$ twice per time step, which is acceptable since $f_F$ is the cheaper part of the force. Furthermore, although spectral reduction is slightly less effective than with $f_{\eta,1}$, the numerical method will later improve this bound to $\bar{\rho}_{\eta,2} < 1.35\,\rho_s$ (Theorem \ref{thm:stab_mROCK2}).

\subsection{Stability analysis of the modified equation}\label{sec:stab_modeq}
We now study the stiffness of the modified equation \eqref{eq:odemod2}. We determine necessary conditions to have robust bounds on  $\rho_{\eta,2}$ of the system. 
It will only depend on $\rhos$ the spectral radius of the Jacobian of $\fs$, and hence the stiffness of the modified equation only depends on the slow components. 

Let the Jacobians of $\ff$ and $\fs$ be simultaneously diagonalizable in the same basis. Then, after linearization of the dynamics, the stability analysis of \cref{eq:ode,eq:odemod2} reduces to the scalar \textit{multirate test equation}:
\begin{equation}\label{eq:mtesteq}
\dot{y} = \lambda y+\zeta y,\qquad\qquad
y(0)=y_0,
\end{equation}
with $\lambda,\zeta\leq 0$ and $y_0\in\Rb$, which corresponds to setting $\ff(y)=\lambda y$ and $\fs(y)=\zeta y$; thus, $\rhof=|\lambda|$ and $\rhos=|\zeta|$. Since we do not assume any scale separation, we have
$\lambda \in[-\infty,0]$ where $|\lambda|$ is possibly very large. Using the variation of constant formula with $\varphi(z)=\frac{e^z-1}{z}\label{phiz}$, we first calculate $f_{\eta,1}$ obtained from \eqref{eq:defu2}:

\begin{align}\label{eq:solutesteq}
u(\eta) &= (e^{\eta\lambda}+\varphi(\eta\lambda)\eta \zeta )y, \\ \label{eq:fetesteq}
f_{\eta,1}(y) &= \varphi(\eta\lambda)(\lambda+\zeta)y
\end{align}
Then, we evaluate $v(\eta)$  \eqref{eq:defv} and $f_{\eta,2}$ defined in \eqref{def:fe2}:
\begin{align}
v(\eta)&= e^{\eta\lambda}y+\varphi(\eta\lambda)\eta \zeta y-\varphi(\eta\lambda)\frac{\eta^2\lambda}{2} f_{\eta,1}(y),\\
f_{\eta,2} (y)&=\varphi(\eta\lambda)\left(\lambda+\zeta-\frac{\eta\lambda}{2}\varphi(\eta\lambda)(\lambda+\zeta)\right)y.
\end{align}
Thus, we can rewrite the modified equation \eqref{eq:odemod2}:
\begin{equation*}
\dot y_{\eta,2}=\varphi(\eta\lambda)(\lambda+\zeta)\left(1-\frac{\eta\lambda}{2}\varphi(\eta\lambda)\right)y.
\end{equation*}
Now we can establish the stability equation for the modified equation.
\begin{theorem}\label{secondordertheorem}
Let $\zeta<0$, then $\varphi(\eta\lambda)(\lambda+\zeta)(1-\frac{\eta\lambda}{2}\varphi(\eta\lambda)) \in [\frac{3}{2}\zeta,0]$ for all $\lambda\leq 0$, if $\eta\geq 2/|\zeta|$.
\end{theorem}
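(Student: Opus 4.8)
The plan is to reduce the claim to a one-variable inequality and dispatch that with a short convexity argument. First I would set $x=\eta\lambda\le 0$ and record the elementary behaviour of $\varphi(z)=(e^{z}-1)/z$ on the negative axis: since $e^{x}\ge 1+x$, one has $\varphi(x)\in(0,1]$ for $x\le 0$. The crucial simplification is that $\tfrac{x}{2}\varphi(x)=\tfrac12(e^{x}-1)$, so the last factor collapses to
\[
1-\frac{\eta\lambda}{2}\varphi(\eta\lambda)=\frac{3-e^{x}}{2}\in\Bigl[1,\tfrac{3}{2}\Bigr),
\]
which is in particular strictly positive. Thus the quantity to be bounded is $E=\varphi(x)\,(\lambda+\zeta)\,\tfrac{3-e^{x}}{2}$, and the upper bound $E\le 0$ is immediate: $\varphi(x)\ge 0$, the bracket is positive, and $\lambda+\zeta\le 0$, so the product is nonpositive.

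For the lower bound I would homogenize in $\eta$. Writing $\mu=\eta\lambda\le 0$ and $c=\eta\zeta$, the hypothesis $\eta\ge 2/|\zeta|$ is exactly $c\le-2$; since $\lambda+\zeta=(\mu+c)/\eta$ and $\tfrac32\zeta=3c/(2\eta)$, multiplying $E\ge\tfrac32\zeta$ through by $\eta>0$ turns it into
\[
P(\mu)\,(\mu+c)\ \ge\ \frac{3c}{2},\qquad P(\mu)=\varphi(\mu)\,\frac{3-e^{\mu}}{2}.
\]
The left side is affine in $c$ with slope $P(\mu)-\tfrac32<0$ (as $P(\mu)<\tfrac32$), hence decreasing in $c$, so over the admissible range $c\le-2$ the worst case is $c=-2$. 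It therefore suffices to prove the single-variable inequality $P(\mu)(2-\mu)\le 3$ for all $\mu\le 0$.

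The heart of the matter, and the step I expect to be the main obstacle, is this last inequality, since bounding the two factors of $E$ separately is too lossy to reach the constant $\tfrac32$. The efficient route is a sharp auxiliary estimate on $\varphi$ alone, namely $\varphi(\mu)(2-\mu)\le 2$ for $\mu\le 0$. Setting $s=-\mu\ge 0$ this reads $(2-s)e^{s}\le 2+s$, which I would prove by letting $q(s)=(2-s)e^{s}-(2+s)$ and checking $q(0)=q'(0)=0$ together with $q''(s)=-s\,e^{s}\le 0$, which forces $q\le 0$ on $[0,\infty)$. Granting this, the remaining factor telescopes cleanly,
\[
P(\mu)(2-\mu)=\bigl[\varphi(\mu)(2-\mu)\bigr]\,\frac{3-e^{\mu}}{2}\ \le\ 2\cdot\frac{3-e^{\mu}}{2}=3-e^{\mu}\le 3,
\]
which closes the argument; the residual slack $3-e^{\mu}<3$ indicates that the constant $\tfrac32$ is not sharp.
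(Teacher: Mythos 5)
Your argument is correct. It rests on the same factorization as the paper's proof: the expression splits into the first-order factor $\varphi(\eta\lambda)(\lambda+\zeta)$ and the correction factor $1-\tfrac{\eta\lambda}{2}\varphi(\eta\lambda)=\tfrac{3-e^{\eta\lambda}}{2}\in[1,\tfrac32)$, and the bound follows by multiplying the two. The difference is how the first factor is controlled: the paper simply invokes Theorem~2.7 of \cite{AbG22}, which gives $\varphi(\eta\lambda)(\lambda+\zeta)\in[\zeta,0]$ for $\eta\ge 2/|\zeta|$, whereas you re-derive exactly this bound from scratch --- your homogenization $\mu=\eta\lambda$, $c=\eta\zeta\le-2$, the reduction to the worst case $c=-2$ by monotonicity in $c$, and the inequality $(2-s)e^{s}\le 2+s$ (proved via $q''(s)=-se^{s}\le 0$) together constitute a self-contained proof of the cited result at its extremal parameter value. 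What your route buys is independence from the external reference and an explicit record of where the slack sits (the residual factor $3-e^{\mu}<3$ confirming that $\tfrac32\zeta$ is not attained for $\lambda<0$); what it costs is length, since the interval-product argument of the paper closes in two lines once the citation is granted. One cosmetic remark: where you say ``the left side is affine in $c$ with slope $P(\mu)-\tfrac32$,'' you mean the difference of the two sides; the left side alone has slope $P(\mu)$. The logic is unaffected.
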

\begin{proof}
By Theorem~2.7 in~\cite{AbG22}, 
\begin{equation}\label{eq:thmstab1}   
\varphi(\eta \lambda)(\lambda + \zeta) \in [-\zeta,0],
\quad \mbox{for all} \, \zeta < 0, \ \lambda < 0, \ \eta \geq 2/|\zeta|.
\end{equation}
Moreover,
\[
1 - \tfrac{\eta \lambda}{2}\varphi(\eta \lambda) 
= 1 - \tfrac{1}{2}(e^{\eta \lambda} - 1) \in [0, \tfrac{3}{2}],
\quad \text{since } e^{\eta \lambda} \in [0,1].
\]
The product of these two intervals gives the result.
\end{proof}

Thus, $\eta$ does not depend on $\lambda$ and the result holds for all $\lambda< 0$.

\section{Construction of explicit stabilized multirate methods}

In the previous section, we introduce a new second-order averaged force \( f_{\eta,2} \), extending the first-order averaged force \( f_{\eta,1} \). This construction allows the use of a multirate strategy inspired by the mRKC method~\cite{AbG22}, now adapted to a second-order context by employing \( f_{\eta,2} \). The key advantage of this multirate approach lies in its efficiency: the number of evaluations of the expensive component \( f_S \) remains independent of the stiffness of the fast component \( f_F \), leading to significant gains over classical explicit stabilized methods.

This section sets the foundation for the new multirate method by first reviewing the explicit stabilized methods  RKC \cite{HoS80} and ROCK2 methods \cite{AbM01} that we use for the design of mROCK2. In the new method, RKC is employed as the micro-method to approximate the modified equation $f_{\eta,2}$ \eqref{def:fe2}, while the ROCK2 method serves as the macro-method for integration the modified equation \eqref{eq:odemod2}.

In contrast, we recall the previous multirate RKC (mRKC) method~\cite{AbG22}, which already uses this averaging-based multirate strategy in only first-order accurate. Specifically, it uses the RKC method both to approximate the averaged force as a micro method and to solve the modified equation as a macro method.

\subsection{The explicit stabilized Runge-Kutta methods}
Stability is a fundamental property of numerical method for stiff differential equations. A method is stable if the numerical solution is bounded throughout the integration process. To study this, we use the scalar test function: 
\begin{equation}\label{testequation}
    \dot{y}=\lambda y,\qquad y(0)=y_0,
\end{equation}
with $\lambda \in \mathbb{C}$. When applying a Runge–Kutta  method (RK) with step size $\tau$ to this problem, the iteration takes the form $y_{k+1}=R(\lambda \tau)y_k$, which by induction yields $y_{k}=R(\lambda \tau)^ky_0$. The function $R(z)$ is called the stability function. The stability region is then defined as $\mathcal{S}=\{ z\in\Cb\,:\, |\Pe(z)|\leq 1\}$. Hence, $y_k$ remains bounded for all $k$ if and only if $\lambda h\in \mathcal{S}$. For implicit RK methods $R(z)$ is a rational function, while for explicit ones, $R(z)$ is a polynomial.

The explicit stabilized Runge methods \cite{Abd02,AbM01,LeM94,HoS80} (see also the survey \cite{Abd13c}) are are families of explicit Runge-Kutta methods, with a large number of internal stages built to extend the stability domain along the negative real axis. By increasing the number of internal stages $s$, these methods enlarge the portion of the negative real axis contained in the stability region ($[-\ell_s, 0] \subset \mathcal{S}$) with size that grows quadratically w.r.t. the number of internal stages $s$ as $\ell_s \sim \beta s^2$ with a constant $\beta$ independent of $s \rightarrow +\infty$. We present here the methods that we use for the design of mROCK2, the first-order RKC method \cite{HoS80} and the second-order ROCK2 method \cite{AbM01}.

\subsubsection{The RKC method}
The first-order RKC method \cite{HW96, HoS80} is not much used in practice for deterministic problems due to its low (first) order of accuracy. However, the method possesses an optimally large stability domain. It is defined by the following $m$-stage Runge-Kutta method that can be implemented as a two-term reccurence as:

\begin{equation}\label{eq:rkc}
\begin{aligned}
k_0&=y_n,\\
k_1 &= k_0+\mu_1\tau f(k_0),\\
k_j&= \nu_j k_{j-1}+\kappa_j k_{j-2}+\mu_j\tau f(k_{j-1}) \quad j=2,\ldots,m,\\
y_{n+1}&=k_m,
\end{aligned}
\end{equation}
where $\tau$ is the step size, $\mu_1 = \ou/\oz$ and
\begin{align*}\label{eq:defcoeff} 
	\mu_j&= 2\ou  b_j/b_{j-1}, &
	\nu_j&= 2\oz b_j/b_{j-1},  &
	\kappa_j&=-b_j/b_{j-2}  &
	\text{for }j&=2,\ldots,m,
\end{align*}
with $\varepsilon\geq 0$, $\oz=1+\varepsilon/m^2$, $\ou=T_m(\oz)/T_m'(\oz)$ and $b_j=1/T_j(\oz)$ for $j=0,\ldots,s$. 

When applied to the test equation \eqref{testequation} and using
\begin{align*}
T_0(x)&= 1, & T_1(x)&= x, & T_j(x)&= 2xT_{j-1}(x)-T_{j-2}(x),
\end{align*}
\cref{eq:rkc} yields $y_{n+1}=P_m(z)y_n$, where $z=\tau\lambda$ and
\begin{equation}\label{eq:stabpolrkc}
	P_m(z)=b_mT_m(\oz+\ou z)
\end{equation}
is the stability polynomial of the method. As $|T_m(x)|\leq 1$ for $x\in [-1,1]$ then $|P_m(z)|\leq 1$ for $z\in [-\ell^\varepsilon_s,0]$ and $\ell_s^\varepsilon=2\oz/\ou$. As $\beta m^2\leq \ell_m^\varepsilon$, for $\beta=2-4\varepsilon/3$, then $|z|\leq \beta m^2$ is a sufficient condition for stability (see \cite{Ver96}) and the stability domain $\mathcal{S}$ increases quadratically, with respect to the number stage $m$ on the real negative axis. The parameter $\varepsilon$ is the damping parameter which avoids for $\varepsilon>0$ points $z$ in the domain such that $P_m(z)=1$. The stability domain becomes a bit shorter with size $\ell_s^\varepsilon = \beta m^2$ where $\beta=\left(2-4/3\varepsilon\right)$ instead of the optimal length $\ell_s^0 = 2m^2$ achieved without damping $\varepsilon=0$. In Figure \ref{stabilityrkc}, we represent the stability function $P_m(z)$ for $m=10$ and different damping values $\varepsilon$. The number of stages $s$ is chosen large enough such that $\tau \rho \le \beta s^2$ where $\rho$ is the spectral radius of the Jacobian of $f$ for the equation $\dot{y}=f(y)$. In practice, we choose $s=\left\lfloor\sqrt{\frac{\tau\rho}{\beta}}\right\rfloor+1$ where $\lfloor\cdot \rfloor$ denotes the integer part function.

\begin{figure}[tbp]
    \centering
    \begin{subfigure}{\linewidth}
        \centering
        \includegraphics[width=0.9\linewidth]{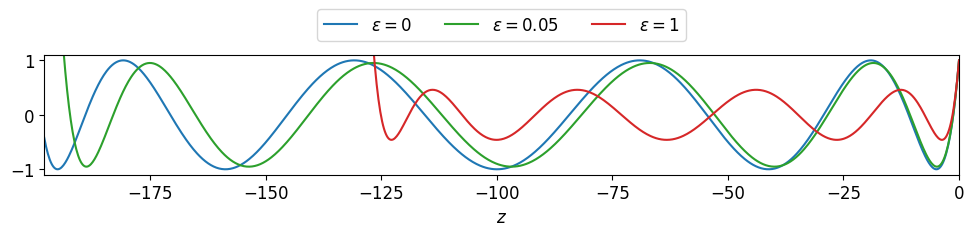}
        \vspace{-0.3em}
        \caption{Stability polynomial $P_m(z)$ of the RKC method with $m=10$ and different damping values $\varepsilon$.}
        \label{stabilityrkc}
    \end{subfigure}

    \vspace{0.3em} 

    \begin{subfigure}{\linewidth}
        \centering
        \includegraphics[width=0.9\linewidth]{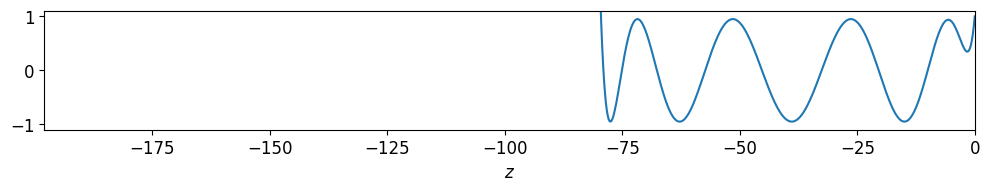}
        \vspace{-0.3em}
        \caption{Stability polynomial $R_s(z)$ of the ROCK2 method with $s=10$.}
        \label{stabilityrock2}
    \end{subfigure}
    \caption{The stability function for the RKC and the ROCK2 method.}
    
\end{figure}
\subsubsection{The ROCK2 method}
Constructing a second-order explicit stabilized is not a trivial task and various methods exist. There exist several methods as the second order RKC \cite{HW96} or the DUMKA method \cite{Lbd89}. We describe here the ROCK2 method \cite{AbM01}. The central idea behind this method was to achieve the second-order accuracy~\cite{AbM01} while preserving a three-term recurrence relation and a nearly optimal stability domain with size $0.81\,s^2$. The scheme is defined as follows:
\begin{equation}\label{def:rock2}
    \begin{aligned}
     k_0 &= y_n \\
     k_1 &= k_0 + \tau \mu_1 f(k_0) \\
     k_j &= h \mu_j f(k_{j-1}) - \nu_j k_{j-1} - \kappa_j k_{j-2}, \quad j = 2, \ldots, s - 2 \\ 
     k_{s{\scriptscriptstyle -}1} &= k_{s-2} + \tau\sigma_1 f(k_{s-2}) \\
     k_{s}^{\star} &= k_{s-1} + \tau\sigma_1 f(k_{s-1}) \\
     k_{s} &= k_s^{\star} - \tau\sigma_1(1 - \tfrac{\sigma_1}{\sigma_2^2})(f(k_{s-1}) - f(k_{s-2})),
    \end{aligned}
\end{equation}
where $\mu_j$, $\nu_j$, and $\kappa_j$ for $j=1...s-2$ are  the three-term relation of the orthogonal polynomial and $\hat{\omega}(x) =1+2\sigma_1 z+\sigma_2 z^2$.
Using the same test function as the first one, we have $R_s(z)=\hat{\omega}(z)\hat{P}_{s-2}(z)$, where \( \hat{P}_{s-2}(z) \) is a member of the orthogonal polynomial family \(\{\hat{P}_j(z)\}_{j \geq 0}\) with respect to the weight function \( \omega(x)^2 /(1 - x^2) \). Moreover, we note that 
\( \hat{\omega}(z) = \omega(1 + 2z / \ell_s) \), where \( \ell_s \) denotes 
the length of the stability domain.  
The polynomial \( P_{s-2} \) has degree \( s - 2 \), while \( \omega \) is a positive polynomial of degree 2 (depending on \( s \)).  
One constructs the polynomials \( \omega \) such that \( R_s \) satisfies  
\[
R_s(z) = 1 + z + \frac{z^2}{2} + O(z^3),
\]
together with a large domain of stability, increasing approximately as \( 0.81\, s^2 \) along the negative real axis. Figure \ref{stabilityrock2} represents  the stability function of the ROCK2 method for $s=10$. Thus the length of the negative real axis included in the stability domain is approximately $81$.

\subsection{Explicit stabilized multirate Runge-Kutta method of order one (mRKC)}
The multirate Runge–Kutta–Chebyshev method (mRKC) \cite{AbG22} is  a first-order multirate explicit stabilized method. It was the first method that approximates the averaged force \eqref{def:fe} in order to reduce its spectral radius. This scheme \cite{AbG22} is obtained by discretizing the modified equation with an $s$-stage RKC method, where $\bar{f}_{\eta,1}$, the approximation of $f_{\eta,1}$, is constructed by solving problem \eqref{eq:defu1} with one step of an $m$-stage RKC method.  Let $\tau>0$ be the step size and $\rhof,\rhos$ the spectral radii of the Jacobians of $\ff,\fs$.
Now, let the number of stages $s,m$ be the smallest integers satisfying
\begin{align}\label{eq:defsmeta1}
\tau\rhos &\leq \beta s^2, & \eta\rhof &\leq  \beta m^2, &\text{with}  && \eta &= \frac{6\tau}{\beta s^2} \frac{m^2}{m^2-1},
\end{align}
The function Get\_Stages($\Delta t,\rhos,\rhof,\epsilon)$ in Algorithm 1 returns the smallest values $s,m$ and $\eta$ that solve \ref{eq:defsmeta1}.
\vspace{-0.5em}
\begin{algorithm}[h!]
\caption{One step of the mRKC method \cite{AbG22} with damping $\epsilon = 0.05$   (as presented in \cite{RdS24}).}
\begin{algorithmic}[1]
\Function{mRKC\_Step}{$t_n, y_n, \Delta t, f_F, f_S, \rho_F, \rho_S$}
    \vspace{0.3em}
    \State $s,m,\eta \gets \textsc{Get\_Stages}(\Delta t, \rho_S, \epsilon)$ \Comment{Solve \eqref{eq:defsmeta1}}
    \vspace{0.3em}
    \State $\bar{f}_{\eta,1}(t, y) \gets \textsc{Averaged\_Force}(t, y, \eta, f_F, f_S, m)$ 
    \vspace{0.3em}
    \State $y_{n+1} \gets \textsc{RKC}(t_n, y_n, \Delta t, \bar{f}_\eta, s, \epsilon)$ \Comment{Compute $y_\eta$ from \eqref{def:fe} using RKC \eqref{eq:rkc}}
    \vspace{0.3em}
    \State \Return $y_{n+1}$
    \vspace{0.3em}
\EndFunction

\vspace{0.6em} 

\Function{Averaged\_Force}{$t, y, \eta, f_F, f_S, m$}
    \vspace{0.3em}
    \State $f_u(r, u) \gets f_F(r, u) + f_S(t, y)$ \Comment{Iterate on $f_F$ with $f_S$ frozen}
    \vspace{0.3em}
    \State $u_\eta \gets \textsc{RKC}(t, y, \eta, f_u, m, \epsilon)$ \Comment{Approximate $u(\eta)$  \eqref{eq:defu1} using RKC}
    \vspace{0.3em}
    \State \Return $\bar{f}_{\eta,1}=\frac{1}{\eta} (u_\eta - y)$ \Comment{Compute the numerical averaged force $\bar{f}_{\eta,1}$}
    \vspace{0.3em}
\EndFunction
\vspace{0.3em}
\end{algorithmic}
\end{algorithm}

In what follows, we analyze the stability of the scheme using the multirate test equation \eqref{eq:mtesteq}.
We use the stability analysis of the averaged force $f_{\eta,1}$ \eqref{eq:solutesteq} to understand the stability analysis of the multirate method. The stability analysis from the modified equation is no longer applies once  is approximated numerically, since $\varphi(z)$ is replaced by a discrete stability function. For the multirate test equation \eqref{eq:mtesteq}, the RKC approximation of $u_\eta$ is given by 
\begin{equation*}
u_\eta = \left(P_m(\eta\lambda) + \Ps_m(\eta\lambda)\eta\zeta\right)y,
\end{equation*}
where $P_m(z) = a_m T_m(\vz + \vu z)$ is the $m$-stage RKC stability polynomial, and $\Ps_m(z) = \frac{P_m(z) - 1}{z}$ for $z \neq 0$ with $\Ps_m(0)=1$. For the multirate test equation, the first order averaged force \cref{eq:fetesteq} becomes
\begin{equation}\label{eq:feta1mrkc}
\bfei(y) = \Ps_m(\eta\lambda)(\lambda + \zeta)y,
\end{equation}
and thus the full stability function of the mRKC scheme reads
\begin{equation}\label{eq:stabmrkc}
y_{n+1} = R_s\big(\tau \Ps_m(\eta\lambda)(\lambda + \zeta)\big)y_n,
\end{equation}
where $R_s(z)$ is the $s$-stage RKC stability polynomial. By Theorem 4.5 of \cite{AbG22}, the method is stable for the multirate test equation \eqref{eq:mtesteq}, namely $|R_s\big(\tau \Ps_m(\eta\lambda)(\lambda + \zeta)\big)|\le 1$ under the condition \eqref{eq:defsmeta1}. In Figure \ref{fig:stabilitymRKC}, we  display the inner (micro) \eqref{eq:feta1mrkc} and outer (macro) \eqref{eq:stabmrkc} stability polynomials for $s=m=10$ as a function of $\lambda$. We set $\tau=1$, $\eta= \frac{6\tau}{\beta s^2} \frac{m^2}{m^2-1}$, and $\zeta=\ell_s$. The first plot shows the stability region of the micro-step; the orange line indicates the boundary condition: $\tau \zeta\le\ell_S$.  The second plot show the stability of the RKC method, thus $|R_s\big(\tau \Ps_m(\eta\lambda)(\lambda + \zeta)\big)|\le 1$ under the condition \eqref{eq:defsmeta1}.
\begin{figure}[tbp]
    \centering
    
    \begin{subfigure}{0.92\linewidth}
        \centering
        \includegraphics[width=\linewidth]{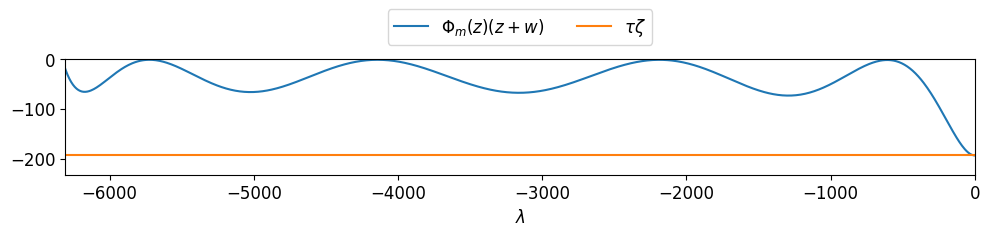}
        \vspace{-0.5cm}
        \caption{$\tau \Phi_m(z)(z+w)(1-\frac{z}{2}\alpha_m\Phi_m(z))$ for $z=\eta\lambda $, $w=\eta\zeta$ and the stability condition $\ell_s=\gamma_\varepsilon\eta\zeta$.}
        \label{subfig:haut}
    \end{subfigure}
    
    \hspace{0.3cm}\begin{subfigure}{0.9\linewidth}
        \centering
        \includegraphics[width=\linewidth]{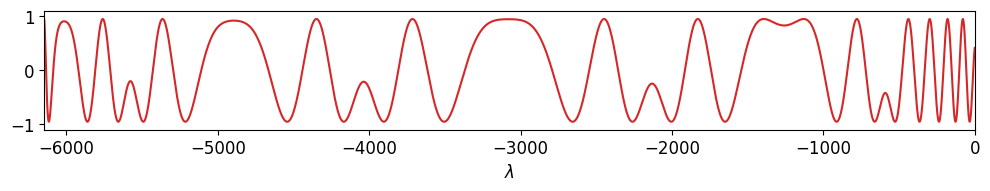}
        \caption{Stability polynomial $R_s\big(\tau \Ps_m(\eta\lambda)(\lambda + \zeta)\big)$. }
        \label{subfig:bas}
    
    \end{subfigure}

    \caption{mRKC on the multirate test equation 
        with $\tau=1$, $\eta= \tfrac{6\tau}{\beta s^2} \tfrac{m^2}{m^2-1}$, $\varepsilon=0.05$, $s=10$, 
        $m=10$, and $\zeta=\ell_s$.}
    \label{fig:stabilitymRKC}
\end{figure}

\section{New mROCK2 method: second-order multirate explicit stabilized explicit method}

This section introduces the new multirate ROCK2 method (mROCK2), which is constructed by applying an \( s \)-stage ROCK2 scheme to discretize the modified equation \eqref{eq:odemod2}, where the second-order averaged force \( f_{\eta,2} \) \eqref{eq:defu2} is approximated using two successive steps of an \( m \)-stage RKC method as introduced in Section \ref{sec:f2}.



We first introduce the mROCK2 algorithm, detailing also a variable step size selection with error control. Then we compare its efficiency gains with the ROCK2 method, and finally analyze its stability and convergence properties.

\subsection{New second-order mROCK2 method}
Let $\tau>0$ be the step size and $\rhof,\rhos$ the spectral radii of the Jacobians of $\ff,\fs$, the fast and slow component as \eqref{eq:ode}.
Now, let the number of stages $s,m$ be the smallest integers satisfying
\begin{align*}
\gamma_m\,\tau\rho_S &\leq \tilde \beta_s s^2, &
\eta\,\rho_F &\leq \beta_m m^2, &
\text{with} \quad \eta &= \frac{6\tau}{\tilde \beta_s s^2} \cdot \frac{m^2}{m^2 - 1}.
\end{align*}
Here, \(\tilde \beta_s\) denotes the stability constant of the ROCK2 method with stability domain size $\beta s^2$ (\(\tilde \beta_s \approx 0.81\) for large \(s\)), while \(\beta_m\) is the stability constant of the RKC method (\(\beta_m \approx 1.93\) for \(\varepsilon = 0.05\)). In addition, $\gamma_m$ is a constant that depends on the second derivatives of the RKC method: $\alpha_m=P_m''(0)$ and $\gamma_m=1+\alpha_m$.
The function Get\_Stages($\Delta t,\rhos,\rhof,\epsilon)$ in Algorithm 2 returns the smallest values $s,m$ and $\eta$ that solve the stability constraints \eqref{eq:defsmeta2} defined below. 
Algorithm 2 gives one step of the mROCK2 scheme.
\begin{algorithm}[h!]\label{def:mrock2}
\caption{One step of the new mROCK2 method}
\label{mROCK2def}
\begin{algorithmic}[1]
\vspace{0.3em}
\Function{mROCK2\_Step}{$t_n, y_n, \Delta t, f_F, f_S, \rho_F, \rho_S$}
\vspace{0.3em}
    \State $s,m,\eta \gets \textsc{Get\_Stages}(\Delta t, \rho_S,\rho_F,\epsilon)$ \Comment{Solve \eqref{eq:defsmeta2} }
    \vspace{0.3em}
    \State $\bar{f}_{\eta,2}(t, y) \gets \textsc{Averaged\_Force}(t, y, \eta, f_F, f_S, m)$ 
    \vspace{0.3em}
    \State $y_{n+1} \gets \textsc{ROCK2}(t_n, y_n, \Delta t, \bar{f}_{\eta,2}, s)$ \Comment{Approximate $y_\eta$ from \eqref{eq:odemod2} using ROCK2 \eqref{eq:rkc}}
    \vspace{0.3em}
    \State \Return $y_{n+1}$
    \vspace{0.3em}
\EndFunction
\vspace{0.8em}
\Function{Averaged\_Force}{$t, y, \eta, f_F, f_S, m$}
    \vspace{0.3em}
    \State $f_u( u) \gets f_F( u) + f_S( y)$ \Comment{Iterate on $f_F$ with $f_S$ frozen}
    \vspace{0.3em}
    \State $u_\eta \gets \textsc{RKC}(t, y, \eta, f_u, m, \epsilon)$ \Comment{Approximate $u(\eta)$ from  \eqref{eq:defu2} using RKC \eqref{eq:rkc}}
    \vspace{0.3em}
    \State $\bar{f}_{\eta,1}=\frac{1}{\eta} (u_\eta - y)$ 
    \vspace{0.3em}
    \State $f_v(v) \gets f_F( v-\frac{\alpha_m\eta}{2}\bar{f}_{\eta,1}) + f_S( y)$ 
    \vspace{0.3em} \Comment{As $\alpha_m=P_m''(0)$, we impose $\bar{f}_{\eta,2}(y)=f(y)+\bigo{\eta^2}$}
    \State $v _\eta \gets \textsc{RKC}(t, y, \eta, f_v, m, \epsilon)$
    \vspace{0.3em} \Comment{Approximate $v(\eta)$  from \eqref{eq:defv} using RKC a second time}
    \vspace{0.3em}
    \State \Return $\bar{f}_{\eta,2}=\frac{1}{\eta} (v_\eta - y)$ \Comment{Compute the numerical averaged force $\bar{f}_{\eta,2}$}
    \vspace{0.3em}
\EndFunction

\vspace{0.3em}
\end{algorithmic}
\end{algorithm}

To better understand how the method works, we provide in Table \ref{tabel} a comparison of the number of evaluations of the slow and fast components of the vector field for one (macro) step of the methods, as well as the size of the stability domain, with those the mRKC and ROCK2 methods. Analogously to  RKC and ROCK2, we observe that increasing the order of convergence from mRKC to mROCK2 reduces the size of the stability domain. However, this trade-off is clearly acceptable, as the improved accuracy of the second-order method compensates for the reduction in the stability domain (see Section~\ref{sec:numex}). We also included RKC and ROCK2 in the comparison, as it requires a large number of $f_S$ evaluations when $f_F$ is very stiff. In Section~\ref{sec:compcost}, we discuss in more detail the situations where the multirate method is more efficient.

\begin{table}[h]
\centering
\renewcommand{\arraystretch}{1.4} 
\begin{tabular}{|l|c|c|c|c|c|}
\hline
        & Order & $\#f_S$ & $\#f_F$ & $l_s$ & $l_m$ \\
\hline
RKC   & $1$ & $m$ & $m$       & $\approx2m^2$  & $\approx2m^2$ \\
\hline
mRKC    & $1$ & $s$ & $ms$      & $\approx2s^2$    & $\approx0.66s^2m^2$ \\
\hline
ROCK2   & $2$ & $m$ & $m$       & $\approx0.81m^2$  & $\approx0.81m^2$ \\
\hline
mROCK2  & $2$ & $s$ & $2ms$     & $\approx0.6s^2$  & $\approx0.27m^2s^2$ \\
\hline
\end{tabular}
\caption{Comparison of the number of function evaluation per time step and the sizes of the stability domain for the RKC, mRKC, ROCK2, and mROCK2 methods.}

\label{tabel}
\end{table}

\subsection{Variable time step with error control}\label{sec:step size}
A key advantage of second-order methods over first-order ones is their compatibility with variable time stepping and error control. To implement variable time steps, the error of the mROCK2 method must first be estimated. The microstep can be neglected in this estimate, while the overall error is dominated by the macrostep. Indeed, if $s\gg1$, then $\eta\ll\tau$ and the errors made when approximating $f$ by the averaged force $f_{\eta,2}$ is negligible. We observe numerically in Section \ref{sec:numex} that the numerical solutions of mROCK2 and ROCK2 are very close. Therefore, it is natural to adopt the error estimator originally developed for ROCK2 (\cite{AbM01}, Section 6.3), that we recall here for completeness:
\begin{equation*}
    \text{err}=\|k^{\star}_s-k_s\|
\end{equation*}
where $k^{\star}_s$ and $k_s$ denote, respectively, the penultimate and the final stages of the ROCK2 algorithm as defined in \eqref{def:rock2}. Finally, compared to the usual stepsize selection for non-stiff problems 
\begin{equation}\label{step size conv}
\tau_{\text{new}} = \text{fac} \cdot \tau_n 
\left(\frac{1}{\text{err}_{n+1}}\right)^{1/2} ,
\end{equation}
we shall  consider the same step size selection strategy with memory, originally introduced by Watts \cite{Wat84} and Gustafsson \cite{Gus94} to tackle the integration of stiff dynamics:
\begin{equation}\label{step size}
\tau_{\text{new}} = \text{fac} \cdot \tau_n 
\left(\frac{1}{\text{err}_{n+1}}\right)^{1/2} 
\frac{\tau_n}{\tau_{n-1}} 
\left(\frac{\text{err}_n}{\text{err}_{n+1}}\right)^{1/2}.
\end{equation}
Here we use the value $\text{fac}=0.8$ commended for stiff problems (instead of $\text{fac}=0.9$ for non-stiff Runge-Kutta methods). 
As recommended in \cite[p.~125]{HW96}, we take for $\tau_{\text{new}}$ the minimum of the step sizes proposed by \eqref{step size} and the conventional strategy \eqref{step size conv}. This conventional strategy is also applied after a step rejection which occurs if $\text{err}>\text{tol}$ where $\text{tol}$ is the brecribed accuracy. In Section~\ref{sec:numex}, we observe the efficiency of such a variable time step strategy notably for the stiff Robertson problem (see Figure~\ref{fig:step size}).

\subsection{Computation of the number of stages}
To simplify the calculation of $s$, $m$, and $\eta$, we fix the damping parameter to $\varepsilon = 0.05$, although any value $\varepsilon > 0$ could be used to increase the damping if needed.
 The general system of constraints obtained from the stability analysis in \cref{sec:staban} reads:
\begin{align}\label{eq:defsmeta}
\left(1+\alpha_m\right)\tau\rhos &\leq \tilde \beta_s s^2, & \eta\rhof &\leq  \beta_m m^2, &\text{with}  && \eta &\ge \frac{2\tau}{\alpha_m\tilde \beta_s s^2} .
\end{align}
We note that obtaining a computable system requires an estimate of $\alpha_m$. Hence, we give the following bounds.
\begin{lemma} \label{thm:optimalbound}
Let \( m \in \mathbb{N} \) and \( \varepsilon > 0 \) and define 
$\alpha_m(\varepsilon) := P_m''(0)$
where $P_m(z)$ is the stability polynomial given in \eqref{eq:stabpolrkc} of the RKC method.
Then,
\begin{equation}\label{inequalityalpham1}
\alpha_m(0) < \alpha_m(\varepsilon) < \alpha(\varepsilon),
\end{equation}
where $\alpha_m(0) = \frac{1}{3} \cdot \frac{m^2 - 1}{m^2}$ 
and 
$\alpha(\varepsilon):=\lim_{m \to \infty} \alpha_m(\varepsilon) =\frac{1}{\tanh(\sqrt{2\varepsilon})} \left( \frac{1}{\tanh(\sqrt{2\varepsilon})} - \frac{1}{\sqrt{2\varepsilon}} \right).$
%
%
\end{lemma}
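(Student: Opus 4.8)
The plan is to first reduce $\alpha_m(\varepsilon)=P_m''(0)$ to a ratio of Chebyshev values and then handle the two inequalities separately. Differentiating $P_m(z)=b_m T_m(\omega_0+\omega_1 z)$ twice and inserting $b_m=1/T_m(\omega_0)$ together with $\omega_1=T_m(\omega_0)/T_m'(\omega_0)$ gives the closed form
\[
\alpha_m(\varepsilon)=\frac{T_m(\omega_0)\,T_m''(\omega_0)}{T_m'(\omega_0)^2},\qquad \omega_0=1+\varepsilon/m^2.
\]
Writing $T_m(x)=2^{m-1}\prod_{k=1}^m(x-x_k)$ with nodes $x_k=\cos\frac{(2k-1)\pi}{2m}\in(-1,1)$ and setting $a_k=1/(\omega_0-x_k)>0$, the logarithmic-derivative identity $T_m''/T_m=(T_m'/T_m)^2-\sum_k a_k^2$ produces the key representation $\alpha_m(\varepsilon)=1-S_2/S_1^2$, where $S_j=\sum_{k=1}^m a_k^{\,j}$. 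This representation drives the lower bound.

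For the lower bound I would show that $\alpha_m$ is strictly increasing in $\omega_0$. Since $a_k'=-a_k^2$, one finds $\frac{d}{d\omega_0}\!\left(S_2/S_1^2\right)=2S_1^{-3}\bigl(S_2^2-S_1S_3\bigr)$, which is $\le 0$ by the Cauchy--Schwarz inequality $S_2^2=\bigl(\sum a_k^2\bigr)^2\le\bigl(\sum a_k\bigr)\bigl(\sum a_k^3\bigr)=S_1S_3$, and strictly so because the $a_k$ are distinct. Hence $\alpha_m$ is strictly increasing in $\omega_0$, and as $\omega_0=1+\varepsilon/m^2>1$ for $\varepsilon>0$ this yields $\alpha_m(\varepsilon)>\alpha_m(0)$. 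Evaluating at $\omega_0=1$ with the standard values $T_m(1)=1$, $T_m'(1)=m^2$, $T_m''(1)=m^2(m^2-1)/3$ (read off from the Chebyshev ODE and its first derivative at $x=1$) gives exactly $\alpha_m(0)=\tfrac13\,(m^2-1)/m^2$.

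For the upper bound I pass to the hyperbolic parametrisation $\omega_0=\cosh\theta_0$ via $T_m(\cosh\theta)=\cosh(m\theta)$, which turns the closed form into $\alpha_m(\varepsilon)=\coth^2\psi-\frac{\coth\psi\,\cosh\theta_0}{m\sinh\theta_0}$ with $\psi=m\theta_0$. The constraint $\cosh\theta_0=1+\varepsilon/m^2$ is equivalent to $m\sinh(\theta_0/2)=\sqrt{\varepsilon/2}$, so writing $\sigma=\theta_0/2$ I get $\psi=\sqrt{2\varepsilon}\,\sigma/\sinh\sigma$ and $m\sinh\theta_0=\sqrt{2\varepsilon}\cosh\sigma$; letting $m\to\infty$ ($\sigma\to0$) recovers $\alpha(\varepsilon)=\coth^2\!\sqrt{2\varepsilon}-\coth\!\sqrt{2\varepsilon}/\sqrt{2\varepsilon}$, matching the claimed formula. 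Two elementary facts then close the bound: from $2\sigma\cosh2\sigma>\sinh2\sigma$ one deduces $\frac{\cosh\theta_0}{m\sinh\theta_0}>\frac1\psi$, hence $\alpha_m(\varepsilon)<\beta(\psi):=\coth^2\psi-\coth\psi/\psi$; and $\psi<\sqrt{2\varepsilon}$ since $\sigma/\sinh\sigma<1$. Thus, provided $\beta$ is increasing, $\alpha_m(\varepsilon)<\beta(\psi)<\beta(\sqrt{2\varepsilon})=\alpha(\varepsilon)$.

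The main obstacle will be the monotonicity of $\beta$ on $(0,\infty)$. I would reduce $\beta'(\psi)>0$ to the positivity of $H(\psi)=\cosh\psi\sinh^2\psi-2\psi^2\cosh\psi+\psi\sinh\psi$ and establish $H>0$ by a power-series argument: using $\cosh\psi\sinh^2\psi=\tfrac14(\cosh3\psi-\cosh\psi)$, the coefficient of $\psi^{2j}$ in $H$ equals $\frac{1}{(2j)!}\bigl(\tfrac{9^j-1}{4}-8j^2+6j\bigr)$, which vanishes for $j=1,2$ and is strictly positive for $j\ge3$ (there $9^j$ dominates $8j^2$). Hence $H(\psi)=\sum_{j\ge3}c_j\psi^{2j}>0$ for $\psi>0$, giving $\beta'>0$ and completing the upper bound.
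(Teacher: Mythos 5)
Your proof is correct, and for the upper bound $\alpha_m(\varepsilon)<\alpha(\varepsilon)$ --- the substantive part of the lemma --- it follows essentially the paper's route: the same closed form $\alpha_m(\varepsilon)=T_m(\omega_0)T_m''(\omega_0)/T_m'(\omega_0)^2$, the same hyperbolic rewriting with $x=m\arccosh(\omega_0)$, the same two elementary inequalities (your $2\sigma\cosh 2\sigma>\sinh 2\sigma$ is exactly \cref{lem:1}, i.e. $\omega_0\arccosh(\omega_0)/\sqrt{\omega_0^2-1}>1$, and $\psi<\sqrt{2\varepsilon}$ is \cref{lem:2}), and the same monotonicity of $t\mapsto\coth^2 t-\coth t/t$ (\cref{lem:3}), which both arguments reduce to the positivity of $\cosh t\sinh^2 t+t\sinh t-2t^2\cosh t$ and settle by a power series --- you via $\cosh t\sinh^2 t=\tfrac{1}{4}(\cosh 3t-\cosh t)$, the paper via the cruder bound $\sinh^2 t>t^2+t^4/3$; your coefficients $\tfrac{1}{(2j)!}\bigl(\tfrac{9^j-1}{4}-8j^2+6j\bigr)$, vanishing for $j=1,2$ and positive for $j\ge 3$, check out. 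The limit computation is likewise identical. Where you genuinely depart is the lower bound: the paper outsources it to \cref{epscroi} (monotonicity of $\alpha_m$ in $\varepsilon$, cited from the mRKC paper), whereas you prove it from scratch via $\alpha_m=1-S_2/S_1^2$ with $S_j=\sum_k(\omega_0-x_k)^{-j}$ and the strict Cauchy--Schwarz inequality $S_2^2<S_1S_3$. That argument is sound and makes the appendix self-contained; it also correctly pins down where strictness comes from (distinctness of the $a_k$), thereby exposing the degenerate case $m=1$, for which $\alpha_1\equiv 0$ and the strict left inequality in \eqref{inequalityalpham1} actually fails --- a caveat the lemma as stated (and the cited reference) glosses over, so you may want to add the hypothesis $m\ge 2$.
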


The first inequality in \eqref{inequalityalpham1} follows directly from Lemma \ref{epscroi} proved in \cite{AbG22}, while the second inequality in \eqref{inequalityalpham1} is proved in Appendix. 
\begin{lemma}\label{epscroi}
\cite[Lemma A.1]{AbG22}  The function \( \alpha_m(\varepsilon) \) is an increasing function of $\varepsilon>0$.
\end{lemma}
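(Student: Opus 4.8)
The plan is to first make $\alpha_m(\varepsilon)=P_m''(0)$ fully explicit and then reduce the monotonicity in $\varepsilon$ to monotonicity in a single real variable. Differentiating the stability polynomial $P_m(z)=b_m T_m(\omega_0+\omega_1 z)$ twice and evaluating at $z=0$ gives $P_m''(0)=b_m\,\omega_1^2\,T_m''(\omega_0)$; substituting $b_m=1/T_m(\omega_0)$ and $\omega_1=T_m(\omega_0)/T_m'(\omega_0)$ yields the closed form
\[
\alpha_m(\varepsilon)=\frac{T_m(\omega_0)\,T_m''(\omega_0)}{\left(T_m'(\omega_0)\right)^2},
\qquad \omega_0=1+\frac{\varepsilon}{m^2}.
\]
Since $\omega_0$ is a strictly increasing affine function of $\varepsilon$, it suffices to prove that $G(x):=T_m(x)T_m''(x)/\left(T_m'(x)\right)^2$ is strictly increasing for $x>1$, which is the range of $\omega_0$ when $\varepsilon>0$.

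The key device is the logarithmic derivative $L(x):=\log T_m(x)$, which is well defined because $T_m(x)\ge 1>0$ for $x\ge 1$. A short computation gives $L''=T_m''/T_m-(L')^2$, hence $G=1+L''/(L')^2$, so that
\[
G'(x)=\frac{L'''(x)\,L'(x)-2\left(L''(x)\right)^2}{\left(L'(x)\right)^3}.
\]
On $(1,\infty)$ the polynomial $T_m$ is strictly increasing (all critical points of $T_m$ lie in $(-1,1)$), so $T_m'>0$ and $L'=T_m'/T_m>0$; the denominator is therefore positive and the sign of $G'$ is that of the numerator $N(x):=L'''L'-2(L'')^2$.

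To control $N$, I would use the factorization $T_m(x)=2^{m-1}\prod_{k=1}^m (x-x_k)$ over its simple real roots $x_k\in(-1,1)$. Writing $t_k:=1/(x-x_k)>0$ for $x>1$, the successive logarithmic derivatives become the power sums
\[
L'=\sum_{k=1}^{m} t_k,\qquad
L''=-\sum_{k=1}^{m} t_k^2,\qquad
L'''=2\sum_{k=1}^{m} t_k^3,
\]
so that $N=2\bigl[(\sum_k t_k^3)(\sum_k t_k)-(\sum_k t_k^2)^2\bigr]$. The Cauchy--Schwarz inequality applied to the vectors $(t_k^{3/2})_k$ and $(t_k^{1/2})_k$ gives $(\sum_k t_k^2)^2\le(\sum_k t_k^3)(\sum_k t_k)$, and the inequality is strict since the $t_k$ are pairwise distinct (the roots $x_k$ are distinct). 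Hence $N(x)>0$ for all $x>1$, so $G'>0$, and composing with the increasing map $\varepsilon\mapsto\omega_0$ shows that $\alpha_m$ is strictly increasing in $\varepsilon$.

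I expect the main obstacle to be that a \emph{direct} differentiation of $G$, after eliminating $T_m''$ and $T_m'''$ via the Chebyshev differential equation $(1-x^2)T_m''-xT_m'+m^2T_m=0$, produces a high-degree polynomial numerator whose positivity on $(1,\infty)$ is opaque and involves heavy cancellation of the dominant terms. The logarithmic-derivative reformulation is precisely what removes this difficulty: it converts the question into the elementary power-sum inequality above, where positivity is immediate from Cauchy--Schwarz and strictness follows from the simplicity of the Chebyshev roots. As a consistency check, one verifies that the same representation recovers the endpoint value $\alpha_m\to\tfrac13(1-m^{-2})$ as $x\to1$, in agreement with an increasing profile in $\varepsilon$.
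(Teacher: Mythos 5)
Your proof is correct, and it is worth noting that for this lemma the paper supplies no argument of its own: the statement is imported wholesale as Lemma~A.1 of \cite{AbG22}, and the only related computation in the present paper (the appendix proof of the \emph{second} inequality in \cref{thm:optimalbound}) works with the hyperbolic representation $\alpha_m(\varepsilon)=\frac{1}{\tanh^2(x)}-\frac{1}{m}\cdot\frac{\omega_0}{\sqrt{\omega_0^2-1}}\cdot\frac{1}{\tanh(x)}$ with $x=m\arccosh(\omega_0)$, which is the analytic flavor one expects in the cited proof. Your route is genuinely different and more elementary. Starting from the closed form $\alpha_m(\varepsilon)=T_m(\omega_0)T_m''(\omega_0)/T_m'(\omega_0)^2$, which you derive correctly and which agrees with the formula used in the appendix, you reduce everything to monotonicity of $G=1+L''/(L')^2$ with $L=\log T_m$ on $(1,\infty)$; the partial-fraction identities $L'=\sum_k t_k$, $L''=-\sum_k t_k^2$, $L'''=2\sum_k t_k^3$ with $t_k=(x-x_k)^{-1}>0$ (the $x_k\in(-1,1)$ being the simple Chebyshev roots) convert the sign of $G'$ into the power-sum inequality $\left(\sum_k t_k^2\right)^2\le\left(\sum_k t_k^3\right)\left(\sum_k t_k\right)$, which is exactly Cauchy--Schwarz. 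All intermediate steps check out: $P_m''(0)=b_m\omega_1^2T_m''(\omega_0)$ with $b_m=1/T_m(\omega_0)$, $\omega_1=T_m(\omega_0)/T_m'(\omega_0)$; $T_m'>0$ and $T_m\ge 1$ on $[1,\infty)$ so the logarithm and the sign analysis are legitimate; and your endpoint check $\alpha_m(0)=\frac{1}{3}\left(1-m^{-2}\right)$ matches the value stated in \cref{thm:optimalbound}. What your approach buys is a proof avoiding hyperbolic calculus entirely, with the monotonicity mechanism made transparent through the root structure of $T_m$; the one caveat is the degenerate case $m=1$, where there is a single root, the Cauchy--Schwarz inequality is an equality, and indeed $T_1''\equiv 0$ gives $\alpha_1\equiv 0$, so your claim of \emph{strict} increase requires $m\ge 2$ --- harmless in context, since the multirate stage conditions (e.g.\ the factor $m^2/(m^2-1)$ in \eqref{eq:defsmeta2}) already presuppose $m\ge 2$, but you should state the restriction.
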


Using \eqref{inequalityalpham1}, we obtain:
\[
1 + \alpha_m< 1 + \alpha(\varepsilon),
\quad \text{and} \quad
\frac{2\tau}{\alpha_m\tilde \beta_s s^2} \geq \frac{6\tau}{\tilde \beta_s s^2} \cdot \frac{m^2}{m^2 - 1}.
\]
For the typical value $\varepsilon = 0.05$, we estimate \(\alpha(\varepsilon) \approx 0.34 < 0.35\). Based on these bounds, we can reformulate the stability constraints as follows:
\begin{align}\label{eq:defsmeta2}
1.35\,\tau\rho_S &\leq \tilde \beta_s s^2, &
\eta\,\rho_F &\leq \beta_m m^2, &
\text{with} \quad \eta &= \frac{6\tau}{\tilde \beta_s s^2} \cdot \frac{m^2}{m^2 - 1}.
\end{align}

\begin{remark}[Relaxed stability condition] In the presence of scale separation, when $\lambda \ll \zeta$, the stability requirements can be relaxed, as observed also for mRKC in \cite{AbG22}. In this regime of scale separation, it becomes advantageous to adjust the micro--step size $\eta$ to a modified value $\bar{\eta}$, which leads to the following stability constraints for mROCK2:
\begin{align}\label{eq:defsmeta3}
1.35\,\tau\rho_S &\leq \tilde \beta_s s^2, &
\bar \eta\,\rho_F &\leq \beta_m m^2, &
\text{with} \quad \bar \eta &= \frac{2.8}{\tilde \beta_s s^2} 
.
\end{align}
For more details, see Remark \ref{remarkscale}. The value of $\bar \eta$ is clearly smaller than \eqref{eq:defsmeta2}. Thus, we need fewer evaluations of $f_F$ and the computational cost is reduced.  
\end{remark}

\subsection{Computational cost of the multirate ROCK2 method}\label{sec:compcost}

Given the spectral radii \(\rho_f\) and \(\rho_s\) of the Jacobians of \(\ff\) and \(\fs\), respectively, we now analyze the theoretical speed-up of the mROCK2 method compared to the standard ROCK2 method. For this purpose, we assume \(\varepsilon = 0\) and allow the parameters \(s\) and \(m\) to vary continuously in \(\mathbb{R}\). 

We introduce \(\costf\) and \(\costs\) as relative costs of evaluating \(\ff\) and \(\fs\), normalized with respect to the cost of evaluating the function \(f\). These coefficients satisfy \(\costf, \costs \in [0,1]\) and \(\costf + \costs = 1\). We also suppose that the spectral radius of the Jacobian $\rho=\rho_F+\rho_S$, to allow $\rho_F<\rho_S$.

Since the ROCK2 scheme requires $s=\sqrt{5\tau\rho/4}$ evaluations of $f$ per time step, its cost per time step is
\begin{equation}\label{eq:costrkc}
C_{ROCK2}=s(\costf+\costs)=\sqrt{\frac{5\tau(\rhof+\rhos)}{4}}.
\end{equation}
For the mROCK2 method, on the other hand, we infer
from \eqref{eq:defsmeta} with the approximation $\tilde \beta_s=\frac{4}{5}$,$\beta_m=2$ and $\gamma_\epsilon=\frac{4}{3}$ and $ \frac{m-1}{m^2}\approx 1 $. Thus, we have the system of equations:
\begin{align*}
\frac{4}{3}\,\tau\rho_S &= \frac{4}{5} s^2, &
\eta\,\rho_F &= 2 m^2, &
\text{with} \quad \eta &= \frac{15\tau}{2 s^2} .
\end{align*}

Its gives $s=\sqrt{\frac{5}{3}\tau\rhos}$ external stages and $\eta=\frac{9}{2}\frac{1}{\rho_S} $that it needs $m=\sqrt{\eta \rho_F/2}=\sqrt{\frac{9}{4}\rhof/\rhos}$ internal stages. Since mROCK2 needs $s$ evaluations of $\fs$ and $2\,s\,m$ evaluations of $\ff$, its cost per time step is
\begin{equation}\label{eq:costmrkc}
C_{mROCK2}=s\,\costs+2s\,m\,\costf \approx (1-\costf)\sqrt{\frac{5\tau\rhos}{3}}+\costf\left(\sqrt{15\tau\rhof}\right)
\end{equation}
The ratio between equations \eqref{eq:costrkc} and \eqref{eq:costmrkc} yields the \textit{relative speed-up}
\begin{equation*}\label{eq:speedup}
S=\frac{C_{ROCK2}}{C_{mROCK2}}= \frac{\sqrt{\rhof+\rhos}}{(1-\costf)\sqrt{\frac{4\tau\rhos}{3}}+\costf\left(\sqrt{12\tau\rhof}\right)}
=\sqrt{{\frac{3}{4}}}\frac{\sqrt{1+r_\rho}}{1+\costf\left(\sqrt{ 9r_\rho}-1\right)},
\end{equation*}
with {\it stiffness ratio} $r_\rho=\rhof/\rhos\in [0,\infty)$.

In \cref{fig:speed_up_rhor}, we display the speed-up \(S\) as a function of \(\costf\) for various values of the ratio \(r_\rho = \rho_f / \rho_s\). When \(\costf\) is sufficiently small, the mROCK2 scheme consistently outperforms the standard ROCK2 method (\(S > 1\)). However, for values around \(\costf \approx 0.3\), the mROCK2 scheme becomes slightly less efficient (\(S < 1\)). This case is less important, since we assume that \(\ff\) is relatively inexpensive to evaluate.
\begin{figure}[tbp]
    \centering
    \begin{subfigure}[t]{0.48\textwidth}
        \centering
        \includegraphics[width=\textwidth]{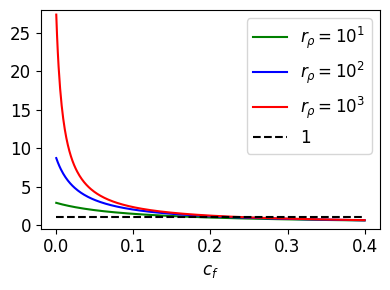}
        \caption{Theoretical speed-up $S$ of the mROCK2 over ROCK2 scheme scheme with respect to $c_F$ and $r_\rho=\rho_F/\rho_S$}
        \label{fig:speed_up_rhor}
    \end{subfigure}
    \hspace{0.02\textwidth}
      \begin{subfigure}[t]{0.48\textwidth}
        \centering
        \includegraphics[width=\textwidth]{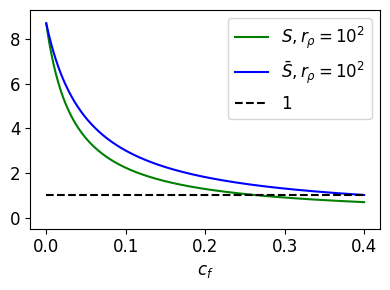}
        \caption{Comparison between $S$ (the relative speed-up without scale separation) and $\bar S$ (the relative speed-up with scale separation) for $r_\rho=100$}
        \label{fig:SvsSbar}
    \end{subfigure}
    \caption{Computational cost of the mROCK2 compared to the ROCK2 method}
\end{figure}

\begin{remark}(Relaxed stability condition)
In the case of scale separation \eqref{eq:defsmeta3}, where the stability analysis is provided in Remark~\ref{remarkscale}, the value of $\eta$ is smaller than in \eqref{eq:defsmeta2}. This reduction leads to fewer evaluations of $f_F$. We denote $\bar{S}$ the relative speed-up of mROCK2 compared to the ROCK2 scheme under condition \eqref{eq:defsmeta3}. The corresponding speed-up is then given by:
 \begin{equation}
\bar{S}=\sqrt{{\frac{3}{4}}}\frac{\sqrt{1+r_\rho}}{1+\costf\left(\sqrt{ 4 r_\rho}-1\right)}.
 \end{equation}
In Figure~\ref{fig:SvsSbar}, we compare it with the speed-up obtained without scale separation, $S$. We have always $\bar S> S$. 
\end{remark}

\subsection{Stability analysis of the mROCK2 method}\label{sec:staban}
We consider the multirate test equation \eqref{eq:mtesteq} and examine both the discrete analogue of the second-order force \eqref{def:fe2} and the modified equation \eqref{eq:odemod2}. The objective of this section is to establish the stability of the mROCK2 method under the condition \eqref{eq:defsmeta}. Let $\Pe_{s,m}(\lambda,\zeta,\tau,\eta)$, denote the stability function of the mROCK2 method, defined by
\begin{equation}\label{eq:defRsm}    
y_{k+1}=\Pe_{s,m}(\lambda,\zeta,\tau,\eta)y_k,
\end{equation}
when we apply the mROCK2 method on the multirate test equation: $\dot{y}=(\lambda+\zeta)y$, $y(0)=y_k$
With this formulation, we are now in a position to state the following theorem.

\begin{theorem}\label{thm:stab_mROCK2}
 Let $\lambda\leq 0$ and $\zeta<0$, $\alpha_m=P_m''(0)$ and $\tilde \beta_s$ the stability coefficient of the ROCK2 method. Then, for all $\tau>0,s,m$ and $\eta$ such that
\begin{align}\label{eq:defsmetascalar}
(1+\alpha_m)\tau |\zeta| &\leq\tilde \beta_ss^2, & \eta|\lambda| &\leq \ell^{\varepsilon_m}_m &\text{with}  && \eta \geq \frac{2\tau(1+\alpha_m)}{\alpha_m\ell_s},
\end{align}
$|\Pe_{s,m}(\lambda,\zeta,\tau,\eta)|\leq 1$, i.e. the mROCK2 scheme is stable.
\end{theorem}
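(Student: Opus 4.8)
The plan is to compute the stability multiplier $\Pe_{s,m}$ explicitly on the multirate test equation \eqref{eq:mtesteq} and then to show that the resulting ROCK2 argument lies in the real stability segment $[-\ell_s,0]\subset\mathcal S$. Since the $m$-stage RKC map is a linear one-step operator, applying it to the first affine auxiliary problem \eqref{eq:defu2} (with $\fs(y)=\zeta y$ frozen) reproduces $u_\eta=(P_m(\eta\lambda)+\Ps_m(\eta\lambda)\eta\zeta)y$ and $\bfei=\Ps_m(\eta\lambda)(\lambda+\zeta)y$, exactly as in \eqref{eq:feta1mrkc}. The second problem \eqref{eq:defv} is again affine, $\dot v=\lambda v+c$ with constant forcing $c=\bigl(\zeta-\tfrac{\alpha_m\eta\lambda}{2}\Ps_m(\eta\lambda)(\lambda+\zeta)\bigr)y$, so the same map gives $v_\eta=P_m(\eta\lambda)y+\Ps_m(\eta\lambda)\eta c$. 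Using the identity $\eta\lambda\,\Ps_m(\eta\lambda)=P_m(\eta\lambda)-1$ this collapses to
\begin{equation*}
\bfeii=\Ps_m(\eta\lambda)(\lambda+\zeta)\Bigl(1-\tfrac{\alpha_m\eta\lambda}{2}\Ps_m(\eta\lambda)\Bigr)y,
\end{equation*}
the discrete counterpart of the modified equation of \cref{sec:stab_modeq}, with $\varphi$ replaced by $\Ps_m$ and $\tfrac{\eta\lambda}{2}$ by $\tfrac{\alpha_m\eta\lambda}{2}$. Feeding this into the $s$-stage ROCK2 map with stability polynomial $R_s$ then yields
\begin{equation*}
\Pe_{s,m}(\lambda,\zeta,\tau,\eta)=R_s\!\Bigl(\tau\,\Ps_m(\eta\lambda)(\lambda+\zeta)\bigl(1-\tfrac{\alpha_m\eta\lambda}{2}\Ps_m(\eta\lambda)\bigr)\Bigr).
\end{equation*}

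The argument is real, so $|R_s|\le1$ will follow once I show it lies in $[-\ell_s,0]$. I would factor it as $\tau AB$ with $A=\Ps_m(\eta\lambda)(\lambda+\zeta)$ and $B=1-\tfrac{\alpha_m\eta\lambda}{2}\Ps_m(\eta\lambda)$. The factor $B$ is controlled immediately: the same identity rewrites $B=1+\tfrac{\alpha_m}{2}-\tfrac{\alpha_m}{2}P_m(\eta\lambda)$, and the second constraint $\eta|\lambda|\le\ell^{\varepsilon_m}_m$ places $\eta\lambda$ in the RKC stability interval, so $P_m(\eta\lambda)\in[-1,1]$ and hence $B\in[1,1+\alpha_m]$. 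On that same interval $\Ps_m(\eta\lambda)=(P_m(\eta\lambda)-1)/(\eta\lambda)\ge0$ while $\lambda+\zeta\le0$, so $A\le0$; combined with $B>0$ this already gives $\tau AB\le0$.

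The lower bound $\tau AB\ge-\ell_s$ is the core of the proof and rests on a discrete spectral-reduction estimate, the analogue for $\Ps_m$ of Theorem~2.7 of \cite{AbG22}: for all $\lambda\le0$,
\begin{equation*}
|A|=|\Ps_m(\eta\lambda)(\lambda+\zeta)|\le\max\Bigl\{|\zeta|,\tfrac{2}{\alpha_m\eta}\Bigr\}.
\end{equation*}
Granting this, the bound $B\le1+\alpha_m$ gives $\tau|AB|\le\max\{(1+\alpha_m)\tau|\zeta|,\,2(1+\alpha_m)\tau/(\alpha_m\eta)\}$, whose two entries are each dominated by $\ell_s$: the first through the first constraint $(1+\alpha_m)\tau|\zeta|\le\tilde\beta_s s^2=\ell_s$, the second through the third constraint $\eta\ge2\tau(1+\alpha_m)/(\alpha_m\ell_s)$. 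This closes the containment. To motivate the estimate, write $A-\zeta=\lambda\Ps_m(\eta\lambda)+\zeta(\Ps_m(\eta\lambda)-1)$ and use $\Ps_m(0)=1$ and $\Ps_m'(0)=\alpha_m/2$, which follow from $P_m(z)=1+z+\tfrac{\alpha_m}{2}z^2+\bigo{z^3}$: the slope of $A$ at $\lambda=0$ equals $1+\tfrac{\alpha_m\eta\zeta}{2}$ and changes sign precisely at $\eta|\zeta|=2/\alpha_m$, which is the threshold separating the regime $A\ge\zeta$ from the regime where the floor $-2/(\alpha_m\eta)$ is active.

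The main obstacle is upgrading this one-point slope computation into the global estimate above. In the continuous case the monotonicity of $\varphi$ makes $\varphi(\eta\lambda)(\lambda+\zeta)\in[\zeta,0]$ immediate, but the discrete multiplier $\Ps_m$ is an oscillating polynomial on $[-\ell^{\varepsilon_m}_m,0]$, so interior dips of $A$ below the claimed floor cannot be excluded by a derivative test at the origin alone. I expect to handle this by combining the values of $\Ps_m(z)(z+\eta\zeta)$ at the nodes where $P_m(z)=\pm1$ with monotonicity and convexity of $z\mapsto\Ps_m(z)$ inherited from the Chebyshev form of $P_m$ and the damping $\varepsilon$, so that the threshold behaviour established at $\lambda=0$ propagates to the entire stability interval.
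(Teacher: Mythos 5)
Your derivation of the stability function is correct and coincides with the paper's Lemma~\ref{lemma:closedbue}, and your subsequent factorization into $\tau AB$ with $B=1-\tfrac{\alpha_m\eta\lambda}{2}\Ps_m(\eta\lambda)\in[1,1+\alpha_m]$ is exactly the reduction used in the paper's proof: both arguments then hinge on showing that $\Ps_m(\eta\lambda)(\eta\lambda+\eta\zeta)$ stays above $-\eta\ell_s/((1+\alpha_m)\tau)$. The problem is that you do not actually establish the step you yourself identify as ``the core of the proof,'' namely the discrete spectral-reduction estimate $|\Ps_m(\eta\lambda)(\lambda+\zeta)|\le\max\{|\zeta|,\,2/(\alpha_m\eta)\}$. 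The paper closes precisely this gap by invoking Lemma~\ref{lemma:disc_stab} (Lemma~4.3 of \cite{AbG22}): for $\varepsilon\le\overline\varepsilon_m$, $\Ps_m(z)(z+w)\in[w,0]$ for all $z\in[-\ell_m^{\varepsilon}\!,0]$ if and only if $|w|\ge 2/P_m''(0)=2/\alpha_m$; combined with the monotonicity of $w\mapsto\Ps_m(z)(z+w)$ (valid since $\Ps_m(z)\ge0$ on the stability interval) and the choice $w(\eta)=-\eta\ell_s/((1+\alpha_m)\tau)$, which the first and third constraints of \eqref{eq:defsmetascalar} force to satisfy both $w(\eta)\le\eta\zeta$ and $|w(\eta)|\ge 2/\alpha_m$, this yields the required containment. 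Your closing paragraph concedes that you only have the slope computation at $z=0$ and a plan (``I expect to handle this by combining the values at the nodes where $P_m(z)=\pm1$ with monotonicity and convexity of $\Ps_m$''); that plan is not a proof, and the oscillatory behaviour of $\Ps_m$ that you correctly flag is exactly why the cited lemma is nontrivial and carries the technical restriction $\varepsilon\le\overline\varepsilon_m$, which your argument never surfaces.

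Two smaller points. First, your displayed estimate is asserted ``for all $\lambda\le0$,'' which is false as written: $\Ps_m$ is a polynomial of degree $m-1$, so $\Ps_m(\eta\lambda)(\lambda+\zeta)$ is unbounded as $\lambda\to-\infty$; the bound can only hold on the RKC stability interval $\eta\lambda\in[-\ell_m^{\varepsilon_m},0]$, which is where the second constraint of \eqref{eq:defsmetascalar} enters and must be stated in the estimate itself. Second, once the estimate is granted your bookkeeping ($\tau|AB|\le\max\{(1+\alpha_m)\tau|\zeta|,\,2(1+\alpha_m)\tau/(\alpha_m\eta)\}\le\ell_s$) is correct and equivalent to the paper's computation, so the proof would be complete if you either cite \cite[Lemma~4.3]{AbG22} for the key estimate, as the paper does, or supply a genuine proof of it.
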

First we have to find the value of $\Pe_{s,m}(\lambda,\zeta,\tau,\eta)$:
\begin{lemma}\label{lemma:closedbue}
	Let $\lambda,\zeta\leq 0$, $\fs(y)=\zeta y$, $\ff(y)=\lambda y$, $\eta>0$, $m,s\in\Nb$ and $y\in\Rb$. Then, the stability function of the mROCK2 method is given by
	\begin{equation*}\label{eq:numutesteq}
\Pe_{s,m}(\lambda,\zeta,\tau,\eta)=\Pe_s\left(\tau\Ps_m(\eta\lambda)(\lambda+\zeta)\left(1-\frac{\eta\lambda\alpha_m}{2}\Ps_m(\eta\lambda)\right)\right)
\end{equation*} 
where $\Ps_m(z) = \frac{P_m(z)-1}{z} $ and $\alpha_m=P_m''(0)$ and 
 $R_s(z)$ is the stability polynomial of the $s$-stage ROCK2 scheme.
	
\end{lemma}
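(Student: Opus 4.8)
The plan is to reduce the lemma to a direct computation that exploits the affine structure of the two micro-step ODEs on the test equation. The single ingredient I would isolate first is the action of one $m$-stage RKC micro-step on an affine scalar problem: applying \eqref{eq:rkc} with step $\eta$ to $\dot u=\lambda u+c$ with $c$ constant and initial value $u_0$ produces $u_\eta=P_m(\eta\lambda)u_0+\eta\,\Ps_m(\eta\lambda)\,c$, where $P_m$ is the stability polynomial \eqref{eq:stabpolrkc} and $\Ps_m(z)=(P_m(z)-1)/z$. I would justify this either by superposition (the map $u_0\mapsto u_\eta$ is affine, its linear part equals $P_m(\eta\lambda)$ by definition of the stability polynomial, and consistency $\Ps_m(0)=1$ fixes the inhomogeneous coefficient) or, more transparently, by the equilibrium shift $w=u+c/\lambda$, which turns the problem into the homogeneous equation $\dot w=\lambda w$ and then uses the affine equivariance of Runge--Kutta methods. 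This discrete identity is precisely the analogue of the variation-of-constants formula $u(\eta)=e^{\eta\lambda}u_0+\eta\varphi(\eta\lambda)c$ with $e^{\eta\lambda}$ and $\varphi$ replaced by their RKC counterparts $P_m$ and $\Ps_m$.

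With this in hand, I would first treat the inner micro-step. On the test equation $f_S$ is frozen to $\zeta y$, so the auxiliary system \eqref{eq:defu2} reads $\dot u=\lambda u+\zeta y$, an affine problem with $c=\zeta y$ and $u_0=y$. The formula above gives $u_\eta=(P_m(\eta\lambda)+\eta\zeta\,\Ps_m(\eta\lambda))y$, hence, using the identity $(P_m(\eta\lambda)-1)/\eta=\lambda\,\Ps_m(\eta\lambda)$, the numerical averaged force $\bfei=(u_\eta-y)/\eta=\Ps_m(\eta\lambda)(\lambda+\zeta)y$, recovering \eqref{eq:feta1mrkc}.

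I would then repeat the argument for the second micro-step \eqref{eq:defv}. Because $f_F$ is linear, the shift $v\mapsto v-\tfrac{\alpha_m\eta}{2}\bfei$ only modifies the constant term: $f_v(v)=\lambda v+c'$ with $c'=\bigl(\zeta-\tfrac{\alpha_m\eta\lambda}{2}\Ps_m(\eta\lambda)(\lambda+\zeta)\bigr)y$, while the stiff coefficient remains $\lambda$. Applying the affine-action formula a second time yields $v_\eta=P_m(\eta\lambda)y+\eta\,\Ps_m(\eta\lambda)c'$, and forming $\bfeii=(v_\eta-y)/\eta$ together with $(P_m(\eta\lambda)-1)/\eta=\lambda\Ps_m(\eta\lambda)$ gives, after factoring out $\Ps_m(\eta\lambda)(\lambda+\zeta)$, the closed form $\bfeii(y)=\Ps_m(\eta\lambda)(\lambda+\zeta)\bigl(1-\tfrac{\alpha_m\eta\lambda}{2}\Ps_m(\eta\lambda)\bigr)y$.

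It then remains to apply the macro step. The modified equation \eqref{eq:odemod2} has reduced to the linear scalar ODE $\dot y=\mu y$ with $\mu=\Ps_m(\eta\lambda)(\lambda+\zeta)\bigl(1-\tfrac{\alpha_m\eta\lambda}{2}\Ps_m(\eta\lambda)\bigr)$, so by the very definition of the ROCK2 stability polynomial one step of the $s$-stage ROCK2 method returns $y_{k+1}=\Pe_s(\tau\mu)y_k$, which is exactly the asserted formula for $\Pe_{s,m}$. I expect the main obstacle to be the clean statement and proof of the affine-action formula for RKC, and the careful propagation of the constant $c'$ through the second micro-step; once those are settled the remainder is elementary algebra, chiefly the repeated use of $\eta^{-1}(P_m(\eta\lambda)-1)=\lambda\Ps_m(\eta\lambda)$ and the factorization of $(\lambda+\zeta)$.
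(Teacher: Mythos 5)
Your proposal is correct and follows essentially the same route as the paper: compute $\bar f_{\eta,1}$, propagate it through the second micro-step to get $v_\eta=P_m(\eta\lambda)y+\eta\Ps_m(\eta\lambda)\bigl(\zeta y-\tfrac{\alpha_m\eta\lambda}{2}\bar f_{\eta,1}\bigr)$, form $\bar f_{\eta,2}$ using $\eta^{-1}(P_m(\eta\lambda)-1)=\lambda\Ps_m(\eta\lambda)$, and then apply the ROCK2 stability polynomial to the resulting scalar linear modified equation. The only difference is that you explicitly state and justify the affine-action identity $u_\eta=P_m(\eta\lambda)u_0+\eta\Ps_m(\eta\lambda)c$, which the paper takes for granted from its earlier mRKC discussion.
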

\begin{proof}
    
We use the fact $\bar{f}_{\eta,1} = \Ps_m(\eta\lambda)(\lambda+\zeta)y$ from \eqref{eq:feta1mrkc}. 
Then, by definition of $v_\eta$,
\begin{equation*}
v_\eta=P_m(\eta\lambda)y+\eta\Ps_m(\eta\lambda)(\zeta y-\frac{\eta\lambda\alpha_m}{2}\bar{f}_{\eta,1})
\end{equation*}
Then, we calculate $\bar{f}_{\eta,2}=\frac{1}{\eta}(v_\eta-y)=\Ps_m(\eta\lambda)(\lambda+\zeta)\left(1-\frac{\eta\lambda\alpha_m}{2}\Ps_m(\eta\lambda)\right)$.\newline
Finally, we compute the internal stage into the external stage, which leads to 
\begin{equation*}
y_{n+1} = \Pe_s\left(\tau\Ps_m(\eta\lambda)(\lambda+\zeta)\left(1-\frac{\eta\lambda\alpha_m}{2}\Ps_m(\eta\lambda)\right)\right)y_n,
\end{equation*} 
with $R_s(z)$ the stability polynomial of the $s$-stage ROCK2 scheme. 
\end{proof}
To prove the stability of the mROCK2 scheme, we need the following lemma on the stability property of $\Phi_m(z)$ used and proved in \cite{AbG22}:
\begin{lemma}\label{lemma:disc_stab}
	(see \cite[Lemma 4.3]{AbG22}) Let $m\in\Nb$ and $w\leq 0$. There exists $\overline\varepsilon_m>0$ such that for $\varepsilon\leq\overline\varepsilon_m$, $\Ps_m(z)(z+w)\in [w,0]$ for all $z\in [-\ell_m^\varepsilon,0]$ if and only if, $\Ps_m'(0)|w|\geq 1$, i.e.  $|w|\geq 2/P_m''(0)$ since $\Ps_m'(0)=P_m''(0)/2$. 
\end{lemma}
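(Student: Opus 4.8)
The plan is to reduce the two–sided membership to a single one–sided polynomial inequality and then to characterise that inequality through the local behaviour at $z=0$. Writing $G(z):=\Ps_m(z)(z+w)-w$ and using the identity $z\,\Ps_m(z)=P_m(z)-1$, one obtains $G(z)=(P_m(z)-1)+w(\Ps_m(z)-1)$. First I would dispose of the upper bound $\Ps_m(z)(z+w)\le 0$, which is automatic: on $[-\ell_m^\varepsilon,0]$ stability gives $|P_m(z)|\le 1$, hence $\Ps_m(z)=(P_m(z)-1)/z\ge 0$ for $z<0$ and $\Ps_m(0)=1$, while $z+w\le w\le 0$. Thus the whole statement is equivalent to the lower bound $G(z)\ge 0$ for all $z\in[-\ell_m^\varepsilon,0]$, and it remains to show that this holds if and only if $\Ps_m'(0)|w|\ge 1$.

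For the \emph{necessity} direction I would argue locally at the right endpoint. Since $G(0)=0$ and $G$ is smooth, the requirement $G(z)\ge 0$ for $z\le 0$ near $0$ forces $G'(0)\le 0$, because $G(-\delta)=-\delta\,G'(0)+\mathcal{O}(\delta^2)$. A direct computation gives $G'(0)=P_m'(0)+w\,\Ps_m'(0)=1+w\alpha_m/2$, so $G'(0)\le 0\iff |w|\ge 2/\alpha_m=2/P_m''(0)$, i.e. $\Ps_m'(0)|w|\ge 1$. The same computation yields the converse failure: if $\Ps_m'(0)|w|<1$ then $G'(0)>0$, whence $G(z)<0$ for $z$ slightly below $0$ and the membership fails.

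For the \emph{sufficiency} direction, assume $|w|\ge 2/\alpha_m$. The key observation is that $G(z)$ is affine in $w$ with slope $\Ps_m(z)-1$. Provided one establishes that $\Ps_m(z)\le 1$ on the whole interval (equivalently $P_m(z)\ge 1+z$), $G$ is non-increasing in $w$, so over the admissible range $w\le -2/\alpha_m$ the inequality $G(z)\ge 0$ is hardest at the boundary value $w^\star=-2/\alpha_m$. Sufficiency then reduces to the single polynomial inequality $G_\star(z):=(P_m(z)-1)-\tfrac{2}{\alpha_m}(\Ps_m(z)-1)\ge 0$ on $[-\ell_m^\varepsilon,0]$, equivalently $\frac{1-P_m(z)}{1-\Ps_m(z)}\le \frac{2}{\alpha_m}$ wherever $\Ps_m(z)<1$. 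This matches the endpoint value, since $G_\star(0)=G_\star'(0)=0$ and the ratio tends to $2/\alpha_m$ as $z\to 0^-$.

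The main obstacle is precisely the two remaining global facts, namely $\Ps_m(z)\le 1$ and $G_\star(z)\ge 0$ on $[-\ell_m^\varepsilon,0]$, and this is where the damping threshold $\overline\varepsilon_m$ is unavoidable: a single point with $\Ps_m(z)>1$ would force $\Ps_m(z)(z+w)<w$ for \emph{every} $w<0$ and destroy the equivalence. To handle them I would pass to the Chebyshev representation $P_m(z)=b_mT_m(\oz+\ou z)$, substitute $\oz+\ou z=\cos\theta$ so that $P_m$ and $\Ps_m$ are expressed through $\cos(m\theta)$, and show that for $\varepsilon$ below a threshold $\overline\varepsilon_m$ the polynomial stays strictly below $1$ in the interior and the ratio $h_m(z)=(1-P_m(z))/(1-\Ps_m(z))$ is maximised at $z=0$, using $G_\star(0)=G_\star'(0)=0$ together with $G_\star''(0)\ge 0$ and monotonicity of the ratio. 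The continuous analogue recalled in \cref{sec:stab_modeq}, where $\varphi(z)(z+w)\in[w,0]$ for all $z\le 0$ exactly when $|w|\ge 2=1/\varphi'(0)$, serves as the guiding template: the discrete proof replaces $\varphi$ by $\Ps_m$ and the supremum over $z\le 0$ by the maximum over the finite stability interval. Everything outside this extremal analysis is elementary.
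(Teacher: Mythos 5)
Your reductions are sound, but note first that the paper itself contains no proof to compare against: \cref{lemma:disc_stab} is imported verbatim from \cite[Lemma~4.3]{AbG22}, so the benchmark is the proof in that reference, and your attempt must stand on its own. The parts that do stand are genuinely correct and cleanly argued: the upper bound $\Ps_m(z)(z+w)\le 0$ is automatic from $|P_m(z)|\le 1$ on $[-\ell_m^\varepsilon,0]$; the identity $G(z)=(P_m(z)-1)+w(\Ps_m(z)-1)$ together with $G(0)=0$ and $G'(0)=1+w\alpha_m/2$ gives a complete, elementary proof of the ``only if'' direction (and of failure whenever $\Ps_m'(0)|w|<1$); the observation that a single point with $\Ps_m(z_0)>1$ breaks the membership for \emph{every} $w<0$ is a nice touch; and the affineness of $G$ in $w$ correctly reduces sufficiency to the single boundary value $w^\star=-2/\alpha_m$, \emph{provided} $\Ps_m\le 1$ holds on the whole interval.

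The genuine gap is that the ``if'' direction is never actually proved. You funnel everything into two global inequalities on $[-\ell_m^\varepsilon,0]$ --- namely $\Ps_m(z)\le 1$ (equivalently $P_m(z)\ge 1+z$) and $G_\star(z)\ge 0$ --- and for these you offer only a plan (``I would pass to the Chebyshev representation \dots and show \dots''). But the claim that the ratio $h_m(z)=(1-P_m(z))/(1-\Ps_m(z))$ is maximised as $z\to 0^-$ \emph{is} the hard content of the lemma: the local data you invoke ($G_\star(0)=G_\star'(0)=0$, $G_\star''(0)\ge 0$) cannot control the oscillatory part of the interval, where $P_m$ equioscillates near $\pm b_m$ and both $1-P_m$ and $1-\Ps_m$ are far from their Taylor behaviour at the origin; even the sign of $G_\star''(0)=\alpha_m-\tfrac{2}{3\alpha_m}P_m'''(0)$ is asserted rather than checked, and ``monotonicity of the ratio'' is named, not proved. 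Moreover, the statement requires exhibiting a threshold $\overline\varepsilon_m>0$ depending only on $m$, uniformly over $w\le -2/\alpha_m$, and your sketch produces neither a candidate nor a mechanism by which small damping rescues the extremal analysis in the interior oscillations --- precisely the technical work carried out in \cite[Lemma~4.3]{AbG22}, and precisely why the paper remarks after the proof of \cref{thm:stab_mROCK2} that the restriction $\varepsilon\le\overline\varepsilon_m$ is needed for the proof even though it appears unnecessary in practice. As it stands, you have established necessity and a correct reduction of sufficiency; sufficiency itself remains open.
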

\begin{proof}[Proof  of Theorem \ref{thm:stab_mROCK2} ]
$|\Pe_{s,m}(\lambda,\zeta,\tau,\eta)|=|\Pe_s(\tau \Ps_m(\eta\lambda)(\lambda+\zeta))|\leq 1$, if  $\tau \Ps_m(\eta\lambda)(\lambda+\zeta)(1-\frac{\alpha_m\eta\lambda}{2}\Ps(\eta \lambda))\in [-\ell_s^\varepsilon,0]$. So we have to prove this condition.
First, we can remark
\begin{align*}
    1-\frac{\alpha_m\eta\lambda}{2}\Ps(\eta \lambda)=1-\frac{\alpha_m}{2}(P_m(\eta\lambda)-1)\in[1,1+\alpha_m]
\end{align*}
because $P_m(\eta\lambda)\in [-1,1]$.
Hence, it is sufficient to prove the equivalent condition:
\begin{align*}
\Phi_m(\eta\lambda)(\eta\lambda+\eta\zeta)&\in [w(\eta),0], & \mbox{with} && w(\eta)&=-\frac{\eta}{(1+\alpha_m)\tau}\ell_s^\varepsilon.
\end{align*}
Since $\eta\lambda\in [-\ell_m^{\varepsilon_m},0]$, it holds $|P_m(\eta\lambda)|\leq 1$ and we deduce that $\Ps_m(\eta\lambda)\geq 0$. Furthermore, \eqref{eq:defsmetascalar} yields $\eta\zeta\geq w(\eta)$ which implies
\begin{equation*}
0\geq \Phi_m(\eta\lambda)(\eta\lambda+\eta\zeta)\geq  \Phi_m(z(\eta))(z(\eta)+w(\eta)),
\end{equation*}
with $z(\eta)=\eta\lambda$. Hence, it is sufficient to show that $\Phi_m(z(\eta))(z(\eta)+w(\eta))\in [w(\eta),0]$ for all $z(\eta)\in [-\ell^{\varepsilon_m}_m,0]$. From \cref{lemma:disc_stab}, we know that
\begin{equation*}
|w(\eta)| \geq  \frac{2}{\alpha_m}
\end{equation*}
is a necessary and sufficient condition which is verified when $\eta \geq \frac{2\tau(1+\alpha_m)}{\alpha_m\ell_s}$.
\end{proof}
\begin{figure}[tbp]
    \centering
    \begin{subfigure}{0.9\textwidth}
        \centering
        \includegraphics[width=\textwidth]{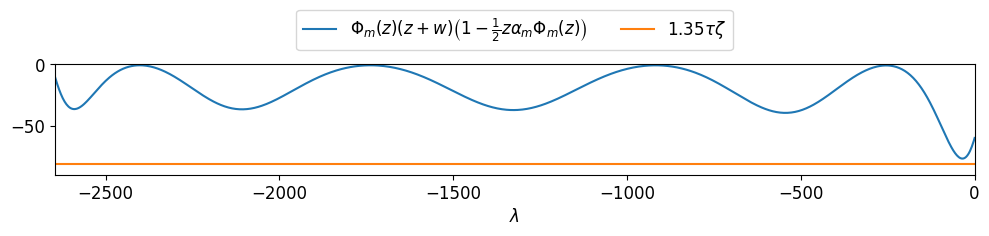}
        \vspace{-0.5cm}
        \caption{The inner stability function: $\tau \Phi_m(z)(z+w)(1-\frac{z}{2}\alpha_m\Phi_m(z))$ for $z=\eta\lambda $, $w=\eta\zeta$ and the stability condition $\ell_s=\gamma_\varepsilon\eta\zeta$.}
        \label{fig:stabilitemrock2a}
    \end{subfigure}
    \begin{subfigure}{0.9\textwidth}
        \centering
        \includegraphics[width=\textwidth]{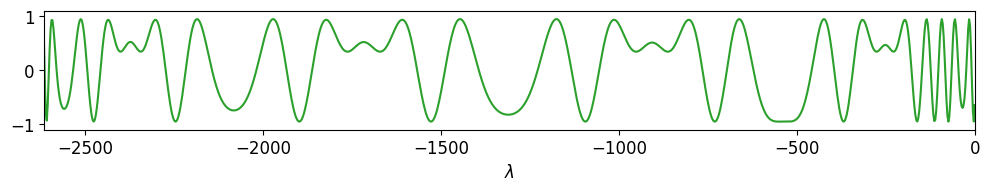}
        \vspace{-0.5cm}
        \caption{Stability function $R_{s,m}(\lambda, \zeta, \tau, \eta)$}
        \label{fig:stabilitemrock2b}
    \end{subfigure}
    
    \caption{The inner and outer stability function of the  mROCK2  for $w=\eta\zeta$, $z=\eta\lambda$, $\tau=1$,  $\eta=\frac{2\tau(1+\alpha_m)}{\alpha_m\ell_s}$, $s=m=10$ and $w \approx 60$ chosen such that the condition $\gamma_\varepsilon\zeta \tau=0.81\ell_s$ for $\varepsilon=0.05$.}
    \label{fig:stabilitemrock2}
\end{figure}
Figure \ref{fig:stabilitemrock2a} represents the inner stability polynomial: $\tau \Phi_m(z)(z+w)(1-\frac{1}{2}z\alpha_m\Phi_m(z))$ for $w=\eta\zeta$, $z=\eta\lambda$, $\tau=1$,  $\eta=\frac{2\tau(1+\alpha_m)}{\alpha_m\ell_s}$, $s=m=10$ and $w \approx 60$ chosen such that the condition $\gamma_\varepsilon\zeta \tau=0.81\ell_s$. The orange line represents the stability condition for the macro-step $\ell_s\ge 1.35\tau\rho_s$. We observe that the method remains stable with a certain margin. Thus, the stability function \(R_{s,m}(\lambda, \zeta, \tau, \eta)\) shown in Figure~\ref{fig:stabilitemrock2b} remains stable for the same parameter values.

 As emphasized in \cite{AbG22} for mRKC, the restriction $\varepsilon\leq \overline\varepsilon_m$ is necessary to prove \cref{lemma:disc_stab}, but probably not necessary in practice also for mROCK2. Indeed, we can numerically verify that for any $\varepsilon \geq 0$, $\Ps_m(z)(z+w)\in [w,0]$ for all $z\in [-\ell_m^\varepsilon,0]$, if and only if, $|w|\geq 2/\Pi_m''(0)$. Hence, we can suppose $\varepsilon_m=\varepsilon$ in \eqref{eq:defsmetascalar} and replace $\ell_m^{\varepsilon_m}$ by $\beta_m m^2$ where $\beta_m$ is the stability coefficient of the RKC method.

Similarly, the condition $\eta \geq \frac{2\tau(1+\alpha_m)}{\alpha_m\ell_s}$ is not restrictive in practice and smaller values of 
\(\eta\) may be chosen. Indeed, the minimum of $\Phi_m(z)(z+w)\left(1 - \tfrac{1}{2} z \alpha_m \Phi_m(z)\right)$ does not correspond to the product of the minimum of $\Phi_m(z)(z+w)$ and the maximum of $\Phi_m(z)(z+w)\left(1 - \tfrac{1}{2} z \alpha_m \Phi_m(z)\right)$. Based on numerical stability experiments for 
\(m \in [1,200]\), we observe that the micro-step size of the 
mRKC method remains applicable, namely $\eta=\frac{6\tau}{\ell_s}\frac{m^2-1}{m^2}$.

\begin{remark}[Stability in the scale separation case]\label{remarkscale}
From \cite{RdS20}, it is shown that 
$
\Phi_m(z)(z+w) \in [0,w],  \text{ for } w > 2 \text{ whenever } z < -8 
\quad ( \lambda < -\tfrac{8}{\eta}).
$
This improves upon the condition 
$
w > \tfrac{2}{P_m''(0)} \approx 6,
$
which holds under a larger damping parameter $\bar \varepsilon = 0.1$. 
In our setting, we keep the original damping $\bar \varepsilon = 0.05$, 
but adjust $\bar \eta$ in order to increase the stability margin, choosing
$
\bar \eta = \frac{2.8\tau}{\ell_s} > \frac{2 \gamma_\varepsilon \tau}{\ell_s}.
$
Figure~\ref{fig:placeholder} illustrates this stability:
the blue curve shows the quantity 
$
\tau \Phi_m(z)(z+w)\left(1 - \tfrac{1}{2} z \alpha_m \Phi_m(z)\right),
$ 
the orange line corresponds to the stability condition needed to apply the macro-step mROCK2, 
and the red vertical line marks the threshold value $\lambda < -\tfrac{8}{\eta}$ for scale separation.

\end{remark}
\begin{figure}[tbp]
    \centering
    \includegraphics[width=0.8\linewidth]{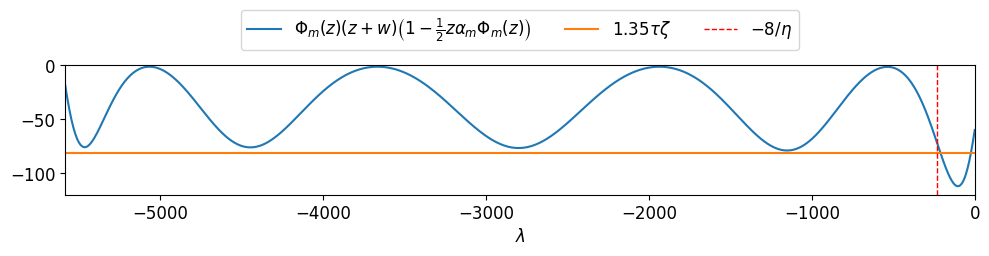}
    \caption{The inner stability function in the case of scale separation:
    $\tau \Phi_m(z)(z+w)(1-\frac{1}{2}z\alpha_m\Phi_m(z))$ for $w=\eta\zeta$, $z=\eta\lambda$, $\tau=1$,  $\frac{2.8\tau}{\ell_s}$, $s=m=10$ and $w \approx 60$ chosen such that the condition $\gamma_\varepsilon\zeta \tau=0.81\ell_s$}
    \label{fig:placeholder}
\end{figure}
\subsection{Convergence analysis of the mROCK2 method}
This section presents the proof of second-order accuracy for mROCK2, the first high-order stabilized multirate method.
\begin{theorem}
    The mROCK2 scheme is second-order accurate
\end{theorem}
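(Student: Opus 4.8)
The plan is to prove second-order accuracy by combining three ingredients: the second-order consistency of the averaged force $f_{\eta,2}$ as an approximation of the true force $f$, the known second-order accuracy of the ROCK2 macro-integrator applied to a smooth field, and the second-order accuracy of the RKC micro-steps used to compute the numerical averaged force $\bar f_{\eta,2}$. The global strategy rests on the observation already made in Section~\ref{sec:f2}: by design, $f_{\eta,2}(y)=f(y)+\mathcal{O}(\eta^2)$, in contrast to the first-order field for which $f_{\eta,1}(y)=f(y)+\tfrac{\eta}{2}f_F'(y)f(y)+\mathcal{O}(\eta^2)$. Since the stage-selection rule \eqref{eq:defsmeta2} enforces $\eta=\mathcal{O}(\tau/s^2)$, the averaging perturbation is in fact of size $\mathcal{O}(\tau^2/s^4)$, which is safely $\mathcal{O}(\tau^2)$ and does not degrade the order.

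First I would establish a \emph{local error} estimate for one macro-step. Let $\Psi_\tau$ denote the exact flow of \eqref{eq:ode} over $[0,\tau]$ and let $y_1$ be the result of one mROCK2 step from $y_0$. I split the local error via a triangle inequality into three contributions: (i) the \emph{modified-equation error} $\|\ye(\tau)-y(\tau)\|$ between the exact solution $\ye$ of the modified ODE $\dot y_{\eta,2}=f_{\eta,2}(\ye)$ and the exact solution $y$ of \eqref{eq:ode}; (ii) the \emph{macro-discretization error} of applying one ROCK2 step to $f_{\eta,2}$ rather than integrating $\dot y_{\eta,2}=f_{\eta,2}$ exactly; and (iii) the \emph{micro-discretization error} coming from replacing the exact averaged force $f_{\eta,2}$ by its RKC-computed surrogate $\bar f_{\eta,2}$. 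For (i), a Gronwall/variation-of-constants argument together with $f_{\eta,2}=f+\mathcal{O}(\eta^2)$ gives $\|\ye(\tau)-y(\tau)\|=\mathcal{O}(\tau\eta^2)=\mathcal{O}(\tau^3)$. For (ii), I invoke the established second-order consistency of ROCK2, so one step contributes a local error $\mathcal{O}(\tau^3)$, provided $f_{\eta,2}$ is sufficiently smooth with $\tau$-uniform derivative bounds. For (iii), I would Taylor-expand both the RKC micro-solution and the exact auxiliary solutions $u,v$ in the step $\eta$: since RKC is first-order consistent but with the correct $P_m''(0)=\alpha_m$ second derivative (which is exactly why the $\tfrac{\alpha_m\eta}{2}\bar f_{\eta,1}$ shift appears in the definition of $f_v$), the discrete $\bar f_{\eta,2}$ matches the continuous $f_{\eta,2}$ up to $\mathcal{O}(\eta^2)$, whence $\tau\|\bar f_{\eta,2}-f_{\eta,2}\|=\mathcal{O}(\tau\eta^2)=\mathcal{O}(\tau^3)$.

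Summing the three pieces yields a local error $\|y_1-\Psi_\tau(y_0)\|=\mathcal{O}(\tau^3)$. I would then pass from local to global error in the standard way for one-step methods on dissipative problems: using the zero-stability (contractivity) of the method on the linearized dynamics, established in Theorem~\ref{thm:stab_mROCK2}, which guarantees a $\tau$-uniform bound on the Lipschitz constant of the discrete propagation, the accumulation of $N=T/\tau$ local errors gives a global error $\mathcal{O}(\tau^2)$. This is the Lady Windermere's-fan argument of \cite{HW96}.

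\textbf{The main obstacle} will be making step (iii) rigorous uniformly in the stiffness. The delicate point is that the micro-integrator uses $m$ stages with $\eta$ scaled so that $\eta\rho_F\le\beta_m m^2$, so one cannot simply Taylor-expand in $\eta$ with constants independent of the fast spectral radius $\rho_F=|\lambda|$. The remainder terms in the expansion of the RKC solution of \eqref{eq:defu2}–\eqref{eq:defv} involve $f_F$ and its derivatives, and the whole design hinges on the identity $\bar f_{\eta,1}=\Phi_m(\eta\lambda)(\lambda+\zeta)y$ from Lemma~\ref{lemma:closedbue} behaving like $f_{\eta,1}$ in the stiff limit. I would therefore argue on the scalar/diagonalized test equation first (as in the stability analysis), verify that $\bar f_{\eta,2}(y)-f(y)=\mathcal{O}(\eta^2)$ with the correct $\alpha_m$ correction \emph{uniformly} in $\lambda\le 0$, and only then transfer the estimate to the nonlinear system via the simultaneous-diagonalizability hypothesis and smoothness of $f_F,f_S$. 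Controlling these stiff remainder terms uniformly is where the real work lies; the rest is a routine order-reduction-free assembly.
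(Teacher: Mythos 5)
Your plan is correct and follows essentially the same route as the paper: a local error estimate built from the $\mathcal{O}(\eta^2)$ consistency of the (numerically computed) averaged force, the second-order accuracy of ROCK2 applied to the modified equation, a triangle inequality, and the observation that $\eta\leq\tau$ turns $\mathcal{O}(\tau\eta^2+\tau^2\eta+\tau^3)$ into $\mathcal{O}(\tau^3)$. The only cosmetic difference is that the paper merges your contributions (i) and (iii) by comparing $y(\tau)$ directly with the exact flow of $\bar f_{\eta,2}$; your remarks on the local-to-global passage and on stiffness-uniform remainder bounds go beyond what the paper's proof makes explicit, but do not change the argument.
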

\begin{proof}
We estimate the local error after one step. From \cref{def:fe} with $y$ replaced by $y_0$ in 
\cref{eq:defv}.

Let $\bar y_\eta(\tau)$ be the solution of $\bar y_\eta' = \bar{f}_{\eta,2}(\bar y_\eta)$ with $\bar y_\eta(0)=y_0$ and $\bfe$ as in \cref{mROCK2def}. Since the RKC scheme  is first-order accurate then $u_\eta=y_0+\eta f(y_0)+\bigo{\eta^2}$ and $\bar{f}_{\eta,1}(y_0)=f(y_0)+\bigo{\eta}$. Then,
\begin{align*}
    v_\eta=y_0+\eta \left( f(y_0)-\frac{\eta\, \alpha_m}{2}f'(y_0)\bar{f}_{\eta,1}(y_0)\right)+\frac{\eta^2\alpha_m}{2}f'(y_0)f(y_0)+\bigo{\eta^3}=y_0+\eta f(y_0)+\bigo{\eta^3}-
\end{align*}
Thus $\bar{f}_{\eta,2}(y_0)=f(y_0)+\bigo{\eta^2}$ and $f_{\eta,2}'(y_0)=f'(y_0)+\bigo{\eta}$, which yields with $y(\tau)$ the exact solution at time $\tau$:
\begin{align*}
	y(\tau) -\bar y_\eta(\tau) &= \tau(f(y_0)-\bar{f}_{\eta,2}(y_0))+\frac{\tau^2}{2}(f'(y_0)f(y_0)-\bar{f}_{\eta,2}'(y_0)\bar{f}_{\eta,2}(y_0))+\bigo{\tau^3} \\&= \bigo{\eta^2\tau+\eta\tau^2+\tau^3}.
\end{align*}
Finally, let $y_1$ be the solution after one step of the mROCK2 scheme \cref{mROCK2def}, which can also be seen as the solution after one step of the ROCK2 scheme applied to $\bar y_\eta' = \bfe(\bar y_\eta)$. Using the fact that the ROCK2 scheme  has second-order accuracy then
\begin{equation*}
	\bar y_\eta(\tau) -y_1 =  \bigo{\tau^3}.
\end{equation*}
By triangular inequality we obtain $|y(\tau)-y_1|= \bigo{\eta^2\tau+\eta\tau^2+\tau^3}$ and from \cref{eq:defsmeta} follows $\eta\leq \tau$, thus $|y(\tau)-y_1|= \bigo{\tau^3}$ and the scheme is second-order accurate.
\end{proof}
\section{Numerical experiments}\label{sec:numex}

In this section, we compare the performance of the mROCK2 method with the classical ROCK2 method through a series of numerical experiments. We first apply both methods to a stiff nonlinear ODE. Additionally, we include a comparison with  the order one mRKC to illustrate the advantage of the higher order two of accuracy. 

For the second and third experiments, involving PDEs, we study the performance of the methods as both the mesh size $H$ and the time step $\tau$ are refined. The efficiency of mROCK2 is evaluated relatively to ROCK2 in each case.

All simulations are implemented in \texttt{Python} using the \texttt{NumPy} and \texttt{Matplotlib} libraries.

\subsection{Robertson's stiff test problem}
\begin{figure}[tbp]
    \centering
    \begin{subfigure}[t]{0.38\textwidth}
        \centering
        \includegraphics[width=\textwidth]{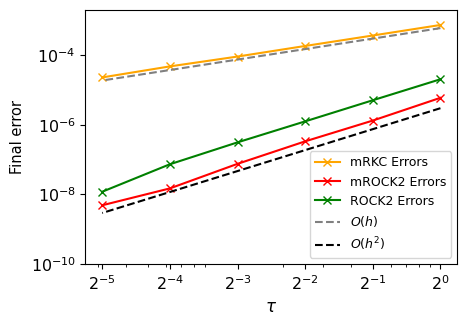}
        \caption{Convergence of mROCK2, mRKC and ROCK2 on Robertson's stiff problem for $T=100$}
        \label{fig:robconv}
    \end{subfigure}
    \hspace{0.02\textwidth}
    \begin{subfigure}[t]{0.45\textwidth}
        \centering
        \includegraphics[width=\textwidth]{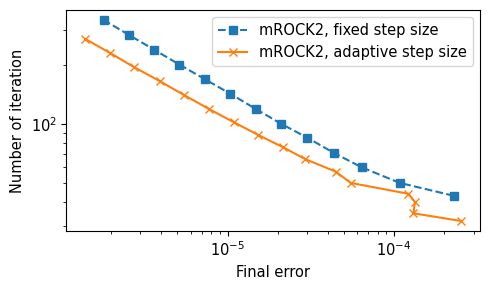}
        \caption{Error versus number of iterations for fixed and adaptive step size strategies at $T=100$}
        \label{fig:step size}
    \end{subfigure}
    \caption{Test of the mROCK2 method on the Robertson's stiff problem }

\end{figure}
First, we study the convergence of the mROCK2 scheme on a popular stiff test problem, Robertson's nonlinear chemical reaction model \cite{HW96}:
\begin{equation}\label{eq:robertson}
\begin{aligned}
y_1' =& -0.04\, y_1+10^4\, y_2\, y_3, & y_1(0)=&1, \\
y_2' =& \,0.04\, y_1-10^4\, y_2\, y_3-3\cdot 10^7\, y_2^2,\qquad \qquad & y_2(0)=&2\cdot 10^{-5},\\
y_3' =& \,3\cdot 10^7\, y_2^2 , & y_3(0)=& 10^{-1},
\end{aligned}
\end{equation}
where $t\in [0,100]$. With this set of parameters and initial conditions, following \cite{AbG22}, the term inducing the most severe stiffness is $-10^4\,y_2\,y_3$. Thus, we let
\begin{align}
\ff(y)=& \begin{pmatrix}
0 \\ -10^4\, y_2\, y_3\\ 0
\end{pmatrix}, &
\fs(y) =& \begin{pmatrix}
-0.04\, y_1+10^4\, y_2\, y_3\\ 0.04\, y_1-3\cdot 10^7\, y_2^2 \\ 3\cdot 10^7\, y_2^2
\end{pmatrix}, &
f(y)=&\ff(y)+\fs(y).
\end{align}
Now, we solve \eqref{eq:robertson} either with the ROCK2, mROCK2  or mRKC scheme using step sizes $\tau = 1/2^k$, $k=0,\ldots,5$. For comparison, we use a reference solution obtained with the standard fourth-order Runge--Kutta scheme using $\tau=10^{-4}$. In \cref{fig:robconv}, we observe that both the mRKC method achieves first-order convergence while mROCK2 and ROCK2 are two to four orders of magnitude more precise thanks to their second-order convergence. Figure \ref{fig:step size} illustrates the efficiency of the adaptive step size strategy for the mROCK2 method by comparing the computational cost versus the error. We contrast the performance of mROCK2 with and without adaptivity, using the same adaptive time step strategy (see Section \ref{sec:step size}).

\subsection{Heat equation on the unit square}\label{sec:exp_conv2}

\begin{figure}[tbp!]
    \centering
    \begin{subfigure}{0.3\textwidth}
        \centering
        \includegraphics[width=\textwidth]{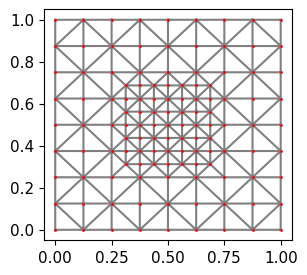}
        \caption{Refined cells for $j=3$, $\rho_s=1.4\cdot10^3$ and $\rho_F=8.5\cdot 10^3$.}
        \label{fig:heat_eq_mROCK2}
    \end{subfigure}
    ~
    \begin{subfigure}{0.45\textwidth}
        \centering
        \includegraphics[width=\textwidth]{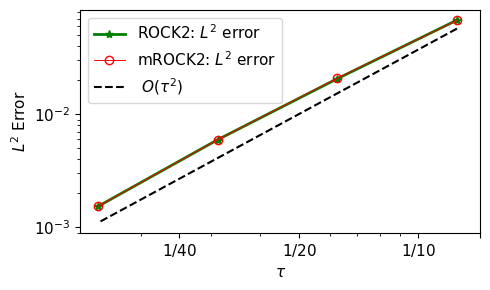}
        \caption{Comparison of the error with the exact solution for mROCK2 and ROCK2.}
        \label{fig:heat_eq_comparison_error}
    \end{subfigure}
    \centering
    \begin{subfigure}{0.4\textwidth}
        \centering
        \includegraphics[width=\textwidth]{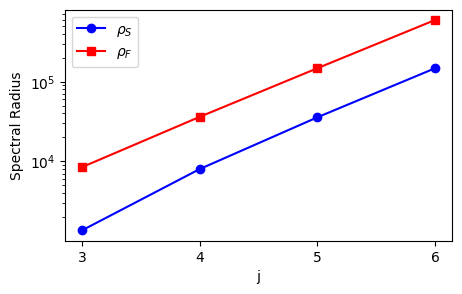}
        \caption{Spectral radius of $A_F$ and $A_S$ as the spatial mesh is refined.}
        \label{fig:spectralradiussimp}
    \end{subfigure}
    ~
    \begin{subfigure}{0.4\textwidth}
        \centering
        \includegraphics[width=\textwidth]{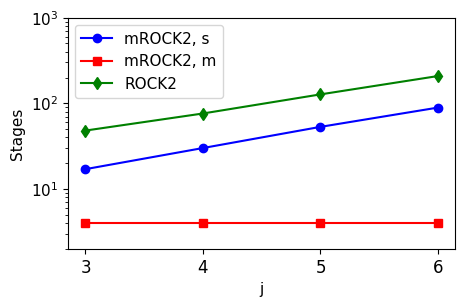}
        \caption{Theoretical numbers of stages $m$ and $s$ for mROCK2, and $n$ for ROCK2.}
        \label{fig:stagesimpl}
    \end{subfigure}

    \caption{Heat equation on the unit square with local mesh refinement.}
    \label{fig:heat_eq_all}
\end{figure}
Next, we verify the space-time convergence properties of the considered multirate methods. To do so, we consider the heat equation in the unit square $\Omega=(0,1)\times (0,1)$,
\begin{equation}\label{eq:par1}
\begin{aligned}
\dt u-\Delta u &=g \qquad &&\text{in }\Omega\times (0,T)\\
u&=0 &&\text{on } \partial\Omega \times (0,T),\\
u&=0 &&\text{on } \Omega\times\{0\},
\end{aligned}
\end{equation}
where $T=1/2$ and $g$ is chosen such that $u(\bx,t)=\sin(\pi x_1)^2\sin(\pi x_2)^2\sin(\pi t)^2$ is the exact solution. 

Starting from a uniform mesh of $2^j\times 2^j$ simplicial elements with $j=2,\ldots,5$, we perform two successive local refinements of all elements located within the square domain \( \Omega_F = (1/4, 3/4) \times (1/4, 3/4) \). Each refinement step is carried out by subdividing interior triangles into four smaller triangles and boundary triangles into two, by inserting a point at the midpoint of the hypotenuse, see top left picture in Figure \ref{fig:the lshape}.

Let $\mathcal{M}$ be the set of elements in the mesh and $\mathcal{M}_F=\{T\in\mathcal{M}\,:\, \overline{T}\cap \overline{\Omega}_F\neq \emptyset\}$ the set of refined elements or their direct neighbors. Then $h=H/4$ is the diameter of the elements inside of $\OF$, with $H$ the diameter of the elements outside of $\OF$. 

Next, we discretize \eqref{eq:par1} in space with first-order FEM  on the mesh $\mathcal{M}$. After inverting the block-diagonal mass matrix ($A=M^{-1}K$ where $M$ is the mass matrix and $K$ is the stiffness matrix), the resulting system is 
\begin{equation}
\dot{y}=A\, y+G,  \qquad\qquad y(0)=y_0,
\end{equation}
where $A\in\Rb^{N\times N}$ and $G\in C([0,T],\Rb^N)$ corresponds to the spatial discretization of $g(\cdot,t)$. Let $D\in\Rb^{N\times N}$ be a diagonal matrix with $D_{ii}=1$ if the $i$th degree of freedom belongs to an element in $\mathcal{M}_F$ and $D_{ii}=0$ otherwise. We also introduce 
\begin{align}\label{eq:splitpar}
A_F=&DA, & A_S=&(I-D)A &\mbox{and} &&\ff(y)=&A_F\, y, & \fs(t,y)=&A_S\, y+G(t),
\end{align}
with $I$ the identity. We recall that the spectral radii $\rhos$ and $\rhof$ of $A_S$ and $A_F$ behave as $\bigo{1/H^2}$ and $\bigo{1/h^2}=\bigo{4/H^2}$, respectively. 

We now consider a sequence of meshes with $j=2,\ldots,5$ and solve \eqref{eq:par1} 
the mRKC  using the same step size $\tau=1/2^{j/2}$ to have $H=O(\tau^2)$. The parameters $s$ and $m$ for mRKC are chosen according to \eqref{eq:defsmeta}. 

In Figure \ref{fig:heat_eq_comparison_error}, we display the error at the final time with respect to the time step \(\tau\) for both mROCK2 and ROCK2. The results show that the two methods exhibit identical convergence behavior. We also plot the spectral radius for each value of \(j\) in Figure \ref{fig:heat_eq_mROCK2}, and compare the number of iterations required by each method in Figure \ref{fig:stagesimpl}. The number of evaluations of the slow function \(f_S\) is observed to be approximately three times higher for the ROCK2 method compared to mROCK2.

\subsection{The L-shape diffusion problem}
Next, we verify the space-time convergence properties on a non-convex surface, which naturally gives rise to singularities near the corners, as discussed in \cite{GMS22}. To do so, we consider a diffusion in the L-shape includes unit square $\Omega=[0,1]\times [0,1]$ without the square $\Omega=[0.5,1]\times [0,0.5]$,
\begin{equation}\label{eq:par2}
\begin{aligned}
\dt u-\Delta u &=g \qquad &&\text{in }\Omega\times [0,T],\\
u&=0 &&\text{in } \partial\Omega \times [0,T],\\
u&=0 &&\text{in } \Omega\times\{0\},
\end{aligned}
\end{equation}
We consider the problem with final time $T = 1$ and a Gaussian source term given by 
\[
g(x,t) = 10 \exp\left(-\frac{(x_1 - x_0)^2 + (x_2 - x_0)^2}{\sigma^2}\right).
\]

As a reference solution, we first perform a standard finite element discretization using a uniform mesh composed of small triangles with diameter $0.005$. The classical finite element method (FEM) is used for the spatial discretization, and the ROCK2 method is applied for time integration with a time step $\tau = 0.005$ to obtain a high-quality approximation of the exact solution. Next, we consider a multiresolution approach: we construct a coarser mesh with larger triangles of diameter $0.1$, except in the region surrounding the singularity, where we refine the mesh with smaller triangles of diameter $0.01$. We then use the matrices $A_F$ and $A_S$ with the same construction  as in the previous experiment (but this time without the square $\Omega=[0.5,1]\times [0,0.5]$), and apply the mROCK2 method for time integration on this refined mesh. Figure \ref{fig:the lshape} shows the approximate solution computed on the coarser mesh with local refinement using the mROCK2 method, alongside a comparison with the solution obtained on a uniformly coarse mesh without refinement. We observe that the locally refined mesh yields better accuracy and efficiency. In cases like this, the mROCK2 method proves to be particularly effective: for example, the number of evaluations at each step of the slow component is reduced to 28 with mROCK2, compared to 96 with the ROCK2 method. We add the plot of the number of iteration of $f_S$ (Figure \ref{costfs}) and $f_F$ (Figure \ref{costff}) and the relative number of iteration for $f$ (Figure \ref{costf}) and compared it with the cost for the ROCK2 method for $H=1/5,1/10,1/20,1/40,1/80$ and $h=H/10$ . The relative cost of evaluating \( f \) is modeled as \( 1 = c_F + c_S \), and the relative number of function evaluations per step in the mROCK2 method is given by \( (1 - c_F)s + 2 c_F s m \). In our experiments, we estimate \( c_F = 0.05 \).

\begin{figure}[tbp]
    \centering
    \hspace{4em}
    \includegraphics[width=0.8\linewidth]{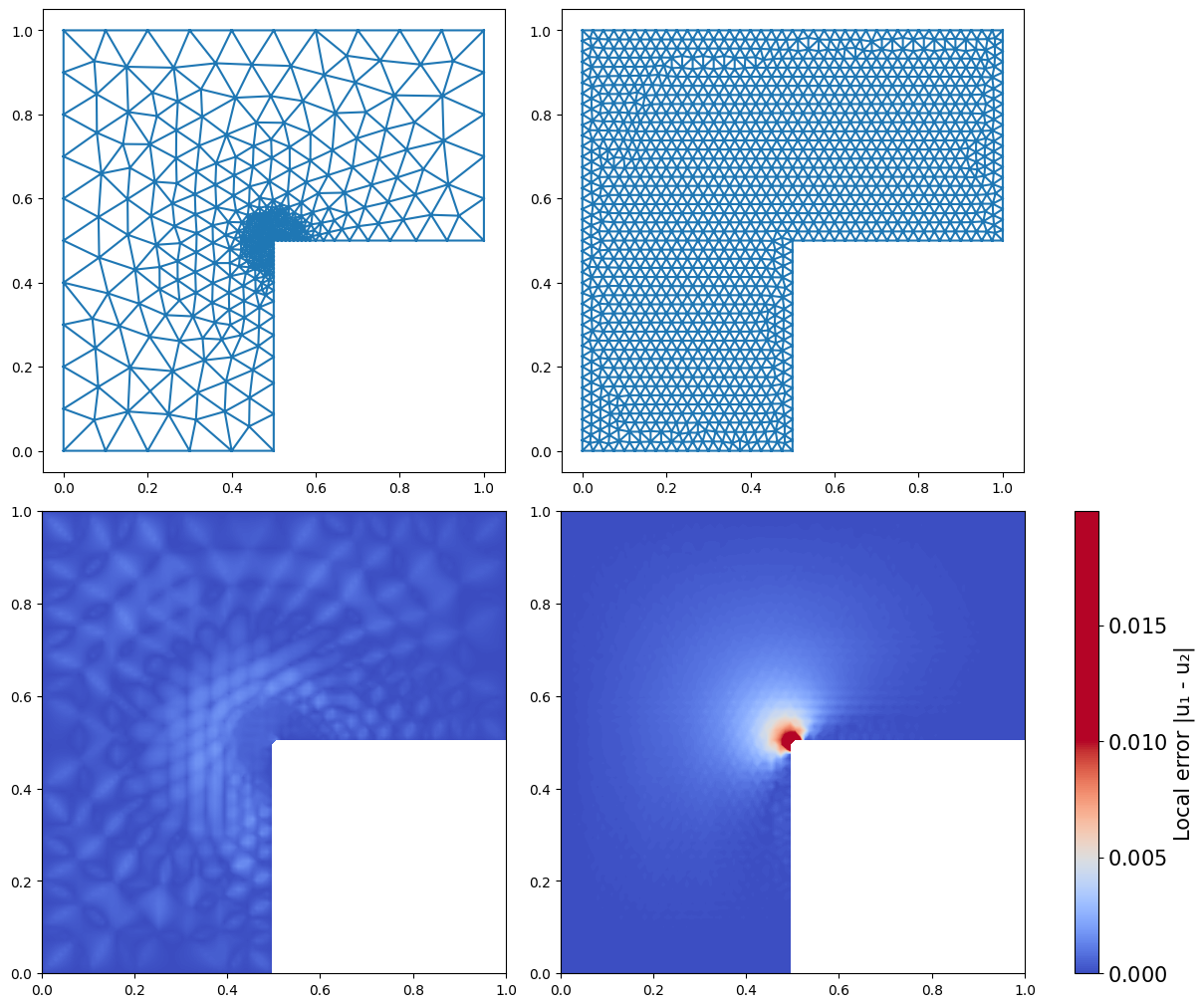}
    \caption{Local numerical error on the L-shaped domain with finite element discretization with a local mesh refinement (left pictures) and a uniform mesh (right pictures).}
    \label{fig:the lshape}
\end{figure}
\begin{figure}[tbp]
    \centering
    
    \begin{subfigure}{0.35\textwidth}
        \centering
        \includegraphics[width=\textwidth]{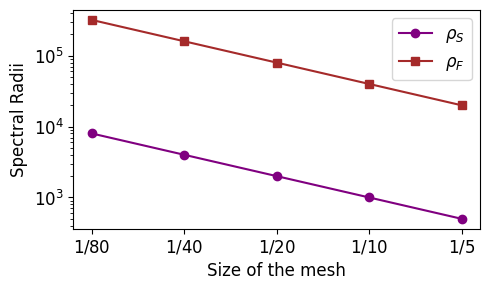}
        \caption{Spectral radii  $\rho_F$ and $rho_s$ of the L-shape problem on the refined mesh with $h=H/10$}
        \label{fig:spectral}
    \end{subfigure}
    \hspace{0.02\textwidth}
    \begin{subfigure}{0.35\textwidth}
        \centering
        \includegraphics[width=\textwidth]{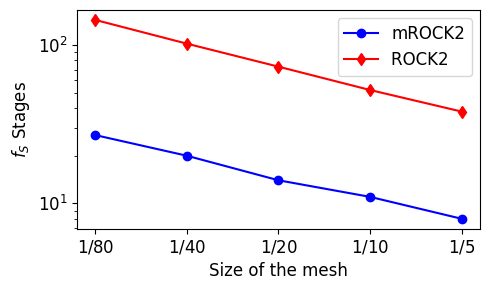}
        \caption{Number of internal evaluations of the slow component $f_S$ required by mROCK2 and ROCK2}
        \label{costfs}
    \end{subfigure}
    
    \vspace{0.5cm} 
    
    \begin{subfigure}{0.35\textwidth}
        \centering
        \includegraphics[width=\textwidth]{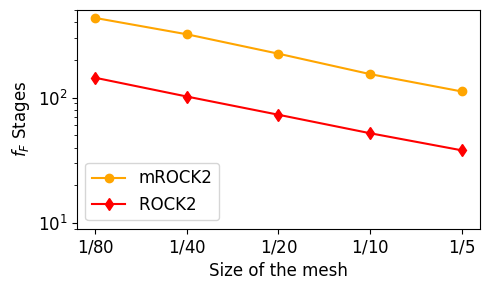}
        \caption{Number of internal evaluations of the fast component $f_F$ required by mROCK2 and ROCK2}
        \label{costff}
    \end{subfigure}
    \hspace{0.02\textwidth}
    \begin{subfigure}{0.35\textwidth}
        \centering
        \includegraphics[width=\textwidth]{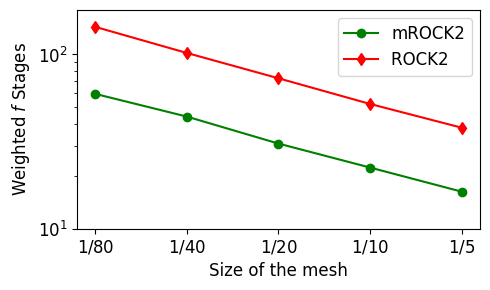}
        \caption{Theoretical number of evaluations of the function $f$ for mROCK2 compared with ROCK2}
        \label{costf}
    \end{subfigure}
    
    \caption{Spectral properties and stage requirements of the mROCK2 and ROCK2 methods for the L-shaped problem}
    \label{lshape_comparison}
\end{figure}

\section*{Acknowledgments}
The authors thank Giacomo Rosilho de Souza for providing his code and for his interest in the further development of multirate explicit stabilized methods. This work was partially supported by the Swiss National Science
Foundation, projects No 200020\_214819 and No. 200020\_192129.

\section{Appendix}




In this appendix, we prove the second inequality in Lemma \ref{thm:optimalbound}. We first establish a few preliminary lemmas:
\begin{lemma}\label{lem:1}
For any $\omega_0 > 1$, the following inequality holds:
\[
\frac{\omega_0}{\sqrt{\omega_0^2 - 1}} \arccosh(\omega_0) > 1.
\]
\end{lemma}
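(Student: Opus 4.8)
The plan is to linearize the inequality through the hyperbolic substitution $\omega_0 = \cosh t$. Since $\omega_0 > 1$, there is a unique $t > 0$ with $\omega_0 = \cosh t$, and then $\arccosh(\omega_0) = t$ while $\sqrt{\omega_0^2 - 1} = \sinh t$. Under this change of variable the claimed inequality becomes
\[
\frac{t \cosh t}{\sinh t} > 1,
\]
which is equivalent to $t \cosh t > \sinh t$ for all $t > 0$. This reformulation is the heart of the argument: it converts the awkward mixture of an algebraic square root and the transcendental function $\arccosh$ into a clean comparison between two elementary hyperbolic functions, and I expect it to be the only genuinely nontrivial step. Everything downstream is a routine monotonicity check.

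Next I would prove $t \cosh t > \sinh t$ on $(0,\infty)$ by introducing $g(t) = t \cosh t - \sinh t$. One computes $g(0) = 0$ and
\[
g'(t) = \cosh t + t \sinh t - \cosh t = t \sinh t,
\]
which is strictly positive for $t > 0$. Hence $g$ is strictly increasing on $(0,\infty)$ with $g(0) = 0$, so $g(t) > 0$ for every $t > 0$. Dividing by $\sinh t > 0$ yields $t \coth t > 1$, and translating back through $\omega_0 = \cosh t$ gives exactly $\frac{\omega_0}{\sqrt{\omega_0^2-1}}\arccosh(\omega_0) > 1$, as required.

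As a sanity check one could instead argue directly from the power series: writing $t \cosh t = \sum_{k \geq 0} t^{2k+1}/(2k)!$ and $\sinh t = \sum_{k \geq 0} t^{2k+1}/(2k+1)!$, the coefficients satisfy $1/(2k)! \geq 1/(2k+1)!$ with strict inequality for $k \geq 1$, so the term-by-term difference is positive for $t > 0$; this reproduces the same conclusion without differentiation. Since the substitution trivializes the problem, I do not anticipate any real obstacle here — the main point to state carefully is simply the validity and bijectivity of $\omega_0 = \cosh t$ on $t > 0$, which legitimizes rewriting $\arccosh$ and the square root, and the strict positivity of $\sinh t$ that preserves the direction of the inequality upon dividing.
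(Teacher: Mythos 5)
Your proof is correct and is essentially the same argument as the paper's: the paper shows $g(\omega_0)=\omega_0\arccosh(\omega_0)-\sqrt{\omega_0^2-1}$ vanishes at $\omega_0=1$ and has positive derivative $\arccosh(\omega_0)$, which is exactly your function $t\cosh t-\sinh t$ and its derivative $t\sinh t$ viewed through the substitution $\omega_0=\cosh t$. The change of variables is a harmless cosmetic difference.
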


\begin{proof}
Let
$
g(\omega_0) := \omega_0 \arccosh(\omega_0) - \sqrt{\omega_0^2 - 1}, \quad \text{for } \omega_0 > 1.
$

First, we observe that $g(1)=0$. Next,we  compute the derivative $g'(\omega_0) = \arccosh(\omega_0)> 1$. Thus, $g$ is strictly increasing on $[1, \infty)$. Since $g(1)=0$ and $g$ increases thereafter, it follows that $g(\omega_0) > 0$ for all $\omega_0 > 1$. This completes the proof.
\end{proof}
\begin{lemma}\label{lem:2}
     For any integer $m$ and $\varepsilon>0$,
     \[
     m\arccosh\left(1+\frac{\varepsilon}{m^2}\right)<\sqrt{2\varepsilon}
     \]
\end{lemma}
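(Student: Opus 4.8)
The plan is to eliminate the parameter $m$ by a substitution and reduce the claim to a single scalar inequality. Writing $\omega_0 := 1 + \varepsilon/m^2 > 1$, we have $\varepsilon = m^2(\omega_0 - 1)$ and hence $\sqrt{2\varepsilon} = m\sqrt{2(\omega_0 - 1)}$. The asserted inequality $m\arccosh(\omega_0) < \sqrt{2\varepsilon}$ then becomes, after dividing by $m > 0$, the $m$-independent statement
\[
\arccosh(\omega_0) < \sqrt{2(\omega_0 - 1)} \qquad \text{for all } \omega_0 > 1.
\]
So it suffices to prove this one-variable inequality; the original claim for every integer $m$ follows immediately by specializing $\omega_0$.

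To establish the reduced inequality I would use the integral representation $\arccosh(\omega_0) = \int_1^{\omega_0} (t^2 - 1)^{-1/2}\, dt$, together with $\sqrt{2(\omega_0 - 1)} = \int_1^{\omega_0} (2(t-1))^{-1/2}\, dt$. For $t > 1$ I factor $t^2 - 1 = (t-1)(t+1)$; since $t + 1 > 2$ on this range, we get the strict pointwise bound $(t^2-1)^{-1/2} < (2(t-1))^{-1/2}$ for every $t > 1$. Integrating this strict inequality from $1$ to $\omega_0$ yields $\arccosh(\omega_0) < \sqrt{2(\omega_0-1)}$, as desired.

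An equivalent route, which I would mention as an alternative, is a monotonicity argument: set $h(x) := \sqrt{2(x-1)} - \arccosh(x)$, which is continuous on $[1,\infty)$ with $h(1) = 0$, and compute
\[
h'(x) = \frac{1}{\sqrt{x-1}}\left(\frac{1}{\sqrt{2}} - \frac{1}{\sqrt{x+1}}\right) > 0 \qquad (x > 1),
\]
the positivity again following from $x + 1 > 2$. Thus $h$ is strictly increasing on $(1,\infty)$, and since $h(1) = 0$ we conclude $h(x) > 0$, i.e. the same inequality.

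The only point requiring a moment of care — and the nearest thing to an obstacle — is the endpoint $t = 1$, where both integrands (equivalently $h'$) blow up. This is harmless because the singularity is of integrable type $(t-1)^{-1/2}$, so both integrals converge and the strict pointwise comparison of integrands transfers to a strict comparison of the integrals; in the monotonicity version, one simply uses continuity of $h$ at $x=1$ and positivity of $h'$ on the open interval, without needing the value of $h'$ at the endpoint. Aside from this, the argument is entirely elementary.
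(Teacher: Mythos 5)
Your proposal is correct and follows essentially the same route as the paper: after the same substitution eliminating $m$, your monotonicity argument for $h(x)=\sqrt{2(x-1)}-\arccosh(x)$ is, up to the shift $x=1+\delta$, identical to the paper's computation that $h(\delta)=\sqrt{2\delta}-\arccosh(1+\delta)$ vanishes at $0$ and has positive derivative. Your integral-comparison phrasing is just a repackaging of that same derivative comparison, and your care about the integrable endpoint singularity is sound.
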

\begin{proof}
    We rewrite the inequality by setting \( \delta = \frac{\varepsilon}{m^2} \). Hence, it remains to prove that $\arccosh(1+\delta)<\sqrt{2\delta}$ for $\delta>0$.
    
    We define the function
    $h(\delta)=\sqrt{2\delta}-\ \arccosh(1+\delta).
    $
    We want to show that \( h(\delta) > 0 \) for all \( \delta > 0 \).
    $h(0)=0$ and $h'(\delta)=\frac{1}{\sqrt{2\delta}}-\frac{1}{\sqrt{2\delta+\delta^2}}>0$ for $\delta>0$. Therefore, \( h \) is strictly increasing on \( (0, \infty) \), and since \( h(0) = 0 \), it follows that \( h(\delta) > 0 \) for all \( \delta > 0 \), which completes the proof.
    \end{proof}
\begin{lemma} \label{lem:3}
For any $x>0$, the function $\frac{1}{\tanh^2 (x)}-\frac{1}{x\tanh(x)}$ is an increasing function    
\end{lemma}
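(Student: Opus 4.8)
The plan is to show directly that $\phi'(x)>0$ for all $x>0$, where $\phi(x)=\coth^2 x-\dfrac{\coth x}{x}$. First I would rewrite $\phi$ over a common denominator as
\[
\phi(x)=\frac{\cosh x\,(x\cosh x-\sinh x)}{x\sinh^2 x},
\]
and differentiate. Using $\cosh'=\sinh$, $\sinh'=\cosh$ together with the quotient rule, a direct (if tedious) computation collapses, thanks to $\cosh^2 x-\sinh^2 x=1$, to the clean form
\[
\phi'(x)=\frac{F(x)}{x^2\sinh^3 x},\qquad F(x)=\cosh x\,\sinh^2 x+x\sinh x-2x^2\cosh x.
\]
Since the denominator is positive for $x>0$, it suffices to prove $F(x)>0$.

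The key step is to establish positivity of $F$ via a power series with manifestly nonnegative coefficients. Applying the product-to-sum identity $\cosh x\,\sinh^2 x=\tfrac{1}{4}(\cosh 3x-\cosh x)$, I would expand each term using $\cosh(ax)=\sum_{k\ge0}\frac{a^{2k}x^{2k}}{(2k)!}$ and $x\sinh x=\sum_{j\ge1}\frac{x^{2j}}{(2j-1)!}$. Collecting the coefficient of $x^{2j}$ gives $F(x)=\sum_{j\ge1}\frac{b_j}{(2j)!}\,x^{2j}$ with
\[
b_j=\frac{9^j-1}{4}-8j^2+6j.
\]
One checks $b_1=b_2=0$ (so the expansion in fact starts at $x^6$), while for $j\ge3$ the exponential term $9^j/4$ dominates the quadratic $8j^2-6j$, giving $b_j>0$. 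Hence every coefficient of $F$ is nonnegative and the first nonzero one is positive, so $F(x)>0$ for all $x>0$, and therefore $\phi'(x)>0$ as required.

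I expect the main obstacle to be the differentiation bookkeeping in the first step: the raw quotient-rule expression has several competing terms of the same order, and care is needed to invoke $\cosh^2 x-\sinh^2 x=1$ at the right moment so that the numerator simplifies exactly to $F$. Once $F$ is isolated, the series argument is robust; the only subtlety there is verifying the exponential-versus-quadratic domination for all $j\ge3$, which is immediate since $9^j/4$ already exceeds $8j^2$ at $j=3$ (namely $182.25>72$) and grows far faster thereafter, so $b_j>0$ follows by a one-line induction.
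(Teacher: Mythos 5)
Your proof is correct and follows essentially the same route as the paper: both compute $\phi'(x)=\bigl(\cosh x\,\sinh^2 x+x\sinh x-2x^2\cosh x\bigr)/(x^2\sinh^3 x)$ and establish positivity of the numerator by a power-series expansion with nonnegative coefficients vanishing only in the lowest orders. The only difference is that you expand the numerator exactly via $\cosh x\,\sinh^2 x=\tfrac14(\cosh 3x-\cosh x)$, whereas the paper first applies the bound $\sinh^2 x> x^2+\tfrac{x^4}{3}$ before expanding; your exact variant is slightly cleaner but does not change the argument in any essential way.
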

\begin{proof}
    Let $\phi(x)=\frac{1}{\tanh^2 x}-\frac{1}{x\tanh(x)}$. We will show that $\phi'(x)>0$ for $x>0$.
    \begin{align*}
      \phi'(x)&=
      \frac{\cosh(x)\sinh^2(x)+x\sinh(x)-2x^2\cosh(x)}{x^2\sinh^3(x)}.
    \end{align*}    
     Since the denominator \( x^2 \sinh^3(x) > 0 \) for all \( x > 0 \), the sign of \( \phi'(x) \) depends on the numerator. Thus, showing that $\phi'(x)>0$ is the same as:
    \[
    \psi(x)=\cosh(x)\sinh^2(x)+x\sinh(x)-2x^2\cosh(x)>0 \quad \text{for} \quad x>0.
    \]
    We consider the Taylor series expansions of hyperbolic functions. In particular, from the series for $\sinh(x)$, it follows that
$\sinh^2(x) >x^2 + \frac{x^4}{3}$ for all $ x > 0$. Exploiting both the Taylor series and this inequality, we bound \( \psi(x) \) from below:
    \begin{align*}
     \psi(x)&>\frac{x^4}{3}\cosh(x)+x\sinh(x)-x^2\cosh(x)\\
     &=\sum^\infty_{n=0}\frac{1}{3}\frac{x^{2n+4}}{(2n)!}+\frac{x^{2n+2}}{(2n+1)!}-\frac{x^{2n+2}}{(2n)!}\\
     &=x^2-x^2+\sum^\infty_{n=1}\left(\frac{1}{3}\frac{1}{(2n-2)!}+\frac{1}{(2n+1)!}-\frac{1}{(2n)!}\right)x^{2n+2}\\
     &=\frac{1}{3}\sum^\infty_{n=1}\left(\frac{8n^3-6n-2}{(2n+1)!}\right)y^{2n+2}.
    \end{align*} 
    If $n=1$, $8n^3-6n-2=0$ and it's easy to prove that for all $n>1$, $8n^3-6n-2>0$,  so every term in the sum with \( n > 1 \) is positive. Hence, \( \psi(x) > 0 \) for all \( x > 0 \), and consequently \( \phi'(x) > 0 \) for all \( x > 0 \). This proves that \( \phi \) is strictly increasing on \( (0, \infty) \).
\end{proof}
\begin{proof}[Proof of the second bound in Lemma \ref{thm:optimalbound}]
By definition, \( \alpha_m(\varepsilon) \) is given by
\[
\alpha_m(\varepsilon) = \frac{T_m(\omega_0) \, T_m''(\omega_0)}{T_m'(\omega_0)^2}, \qquad \text{where } \omega_0 = 1 + \frac{\varepsilon}{m^2}.
\]
The Chebyshev polynomials of the first kind satisfy for $\omega_0>1$, that $
T_m(\omega_0) = \cosh(m \, \arccosh(\omega_0))$ and their derivatives can be expressed as:
\[
T_m'(\omega_0) = \frac{m \, \sinh(m \, \arccosh(\omega_0))}{\sqrt{\omega_0^2 - 1}}, \qquad 
T_m''(\omega_0) = \frac{m^2 \, \cosh(m \, \arccosh(\omega_0))}{\omega_0^2 - 1} - \frac{m \, \omega_0 \, \sinh(m \, \arccosh(\omega_0))}{(\omega_0^2 - 1)^{3/2}}.
\]
Using these expressions, we obtain the formula:
\[
\alpha_m(\varepsilon) = \frac{1}{\tanh^2(x)} - \frac{1}{m} \cdot \frac{\omega_0}{\sqrt{\omega_0^2 - 1}} \cdot \frac{1}{\tanh(x)}=\frac{1}{\tanh(x)} \left( \frac{1}{\tanh(x)} - \frac{1}{m} \cdot \frac{\omega_0}{\sqrt{\omega_0^2 - 1}} \right),
\]
where we set \( x := m \, \arccosh(\omega_0) \). Thus, $
\frac{1}{m} \cdot \frac{\omega_0}{\sqrt{\omega_0^2 - 1}} = \frac{1}{x} \cdot \frac{\omega_0}{\sqrt{\omega_0^2 - 1}} \cdot \arccosh(\omega_0).
$ Finally we obtain:
\[
\alpha_m(\varepsilon) = \frac{1}{\tanh(x)} \left( \frac{1}{\tanh(x)} - \frac{1}{x} \cdot \frac{\omega_0}{\sqrt{\omega_0^2 - 1}} \cdot \arccosh(\omega_0) \right).
\]
From \cref{lem:1}, we deduce that 
$
\frac{1}{x} \cdot \frac{\omega_0}{\sqrt{\omega_0^2 - 1}} \cdot \arccosh(\omega_0) > \frac{1}{x}.
$ Therefore,
\[
\alpha_m(\varepsilon) < \frac{1}{\tanh(x)} \left( \frac{1}{\tanh(x)} - \frac{1}{x} \right).
\]
Next, since \( x < \sqrt{2\varepsilon} \) by \cref{lem:2}, and since the function $
x \mapsto \frac{1}{\tanh(x)}\left( \frac{1}{\tanh(x)} - \frac{1}{x} \right)
$
is increasing by \cref{lem:3}, we conclude that:
\[
\alpha_m(\varepsilon) < \frac{1}{\tanh(\sqrt{2\varepsilon})} \left( \frac{1}{\tanh(\sqrt{2\varepsilon})} - \frac{1}{\sqrt{2\varepsilon}} \right) \
\]
This completes the proof of the first part of the theorem.

We now prove $lim_{m \rightarrow \infty} \alpha_m(\varepsilon)=\frac{1}{\tanh(\sqrt{2\varepsilon})} \left( \frac{1}{\tanh(\sqrt{2\varepsilon})} - \frac{1}{\sqrt{2\varepsilon}} \right)$. We recall the definition of $\alpha_m$:
\[
\alpha_m(\varepsilon) = \frac{1}{\tanh^2(m \, \arccosh(\omega_0))} - \frac{1}{m} \cdot \frac{\omega_0}{\sqrt{\omega_0^2 - 1}} \cdot \frac{1}{\tanh(m \, \arccosh(\omega_0))},
\]
with \( \omega_0 = 1 + \frac{\varepsilon}{m^2} \).
As \( m \to \infty \), we expand \( \arccosh(\omega_0) \) using 
$
\arccosh(1 + \delta) = \sqrt{2\delta} \left( 1 + \bigo{\delta} \right)$, as  $\delta \to 0^+.$
Applying this with \( \delta = \frac{\varepsilon}{m^2} \), we get
\[
m \, \arccosh\left(1 + \frac{\varepsilon}{m^2}\right) = \sqrt{2\varepsilon} \left( 1 + \bigo{\frac{\varepsilon}{m}} \right).
\]
Hence, as \( m \to \infty \),
\begin{align*}
\tanh(m \, \arccosh(\omega_0)) &\to \tanh(\sqrt{2\varepsilon}) \quad \text{and} \quad
\frac{1}{m\sqrt{\omega_0^2 - 1}} = \frac{1}{m\sqrt{\frac{2\varepsilon}{m^2} + \frac{\varepsilon^2}{m^4}}} \longrightarrow \frac{1}{\sqrt{2\varepsilon}}.
\end{align*}
Taking the limit in the original expression, we conclude:
\[
\lim_{m \to \infty} \alpha_m(\varepsilon)
= \frac{1}{\tanh^2(\sqrt{2\varepsilon})} - \frac{1}{\sqrt{2\varepsilon} \tanh(\sqrt{2\varepsilon})}
= \frac{1}{\tanh(\sqrt{2\varepsilon})} \left( \frac{1}{\tanh(\sqrt{2\varepsilon})} - \frac{1}{\sqrt{2\varepsilon}} \right).
\]
This completes the proof of the second inequality in \eqref{inequalityalpham1}.

\end{proof}

\bibliographystyle{myplain}
{\small \bibliography{sample}}

\end{document}